\let\svthefootnote\thefootnote
\newcommand\freefootnote[1]{%
	\let\thefootnote\relax%
	\footnotetext{#1}%
	\let\thefootnote\svthefootnote%
}
\newtheorem{teo}{Theorem}
\newtheorem{lem}[teo]{Lemma}
\newtheorem{prop}[teo]{Proposition}
\theoremstyle{remark}
\newtheorem{obss}[teo]{\bf Remark}
\renewcommand\Affilfont{\itshape\small}
\let\LaTeXtitle\title
\renewcommand{\title}[1]{\LaTeXtitle{\Large{\textbf{#1}}}}
\title{One-dimensional Piecewise Smooth Rational Degree Maps.}
\newcommand\email[2][]%
{\newaffiltrue\let\AB@blk@and\AB@pand
	\if\relax#1\relax\def\AB@note{\AB@thenote}\else\def\AB@note{\relax}%
	\setcounter{Maxaffil}{0}\fi
	\begingroup
	\let\protect\@unexpandable@protect
	\def\thanks{\protect\thanks}\def\footnote{\protect\footnote}%
	\@temptokena=\expandafter{\AB@authors}%
	{\def\\{\protect\\\protect\Affilfont}\xdef\AB@temp{#2}}%
	\xdef\AB@authors{\the\@temptokena\AB@las\AB@au@str
		\protect\\[\affilsep]\protect\Affilfont\AB@temp}%
	\gdef\AB@las{}\gdef\AB@au@str{}%
	{\def\\{, \ignorespaces}\xdef\AB@temp{#2}}%
	\@temptokena=\expandafter{\AB@affillist}%
	\xdef\AB@affillist{\the\@temptokena \AB@affilsep
		\AB@affilnote{}\protect\Affilfont\AB@temp}%
	\endgroup
	\let\AB@affilsep\AB@affilsepx
}
\author[*1]{Maur\'{\i}­cio Firmino Silva Lima}
\author[2]{Tiago Rodrigo Perdig\~ao}
\affil[1]{Centro de Matemática, Computaç\~ao e Cogniç\~ao, UFABC, CEP: 09210-580, Brazil,}
\email{mauricio.lima@ufabc.edu.br}
\affil[2]{CEFET-Centro Federal tecnológico de Minas Gerais, CEP: 35.790-636, Brazil,}
\email{tiagomatt@cefetmg.br}
\begin{document}
	\pagenumbering{arabic}
\date{}		
	\maketitle

	\freefootnote{2020 Mathematics Subject Classification: 37G35, 	37Gxx, 	34C23, 34C25.}
	
	\freefootnote{Key words and phrases. Rational degree maps, piecewise-smooth maps, bifurcations, period doubling bifurcation, robust chaotic motion. }
	
\freefootnote{The author Tiago Rodrigo Perdig\~ao is a Phd student in the postgraduate program of \textit{Centro de Matemática, Computaç\~ao e Cogniç\~ao} at UFABC.} 

\freefootnote{*Corresponding author: Maur\'{\i}­cio Firmino Silva Lima.}
	\begin{section}{Abstract}
		In this paper, we consider a class of continuous maps characterized by a singularity of order $x^{q/p}$ (with $p,q \in \mathbb{N}$, $p>q$, and $(p,q)=1$) on one side of the discontinuity boundary $\Sigma$ and a linear behaviour on the other side. Such maps arise naturally in the study of grazing bifurcations of hybrid and piecewise flows. In this context the boundary collision of a fixed point of the map with $\Sigma$ then corresponds to a grazing bifurcation of the flow. We will start by studying one-dimensional maps, and the main result of this paper is a classification of all bifurcation scenarios, including: period doubling and robust chaos.
	\end{section}
	
\begin{section}{Introduction}

\hspace{0.6cm}With the evolution of research in non-linear dynamics, the theory of one-dimensional maps has played a crucial role. It was in this context that the bifurcation and chaos resulting from period doubling were initially described in \cite{reference5}. When addressing the chaotic dynamics of these maps, the analysis of linear maps in one-dimensional parts, with a single transition point, gains prominence, being commonly known as tent maps. These maps are often used as simple and explicit examples in calculations.

The scientific interest in piecewise smooth dynamics has grown significantly in recent years, especially due to its relevance in applied problems. We highlight some important works: in \cite{reference7} the authors study the movement of an oscillator imposed with a single degree of freedom, subject to an amplitude restriction. In this context, they use analytical methods in order to examine the singularities caused by the impact of a grazing orbit.

In the study of case VI presented in \cite{reference2}, such maps emerge naturally in the problem. Furthermore, these maps appear as Poincaré Maps associated with impact oscillator systems with multiple impacts (see Chapter 6 of \cite{reference2}). 

In \cite{reference8} the authors present a classification of boundary difference bifurcations in discontinuous one-dimensional maps, depending on the configurations of the piecewise linear approximation in a neighborhood of the discontinuity point. Furthermore, a specific example of a system that generates a discontinuous map is studied by considering a well-known power electronic circuit with boost converter controlled by the current mode. 

In work \cite{reference9}, such maps appear in simple neuron firing models.

In \cite{reference1}, the authors address the dynamics and bifurcations of a family of parametrized interval maps, which exhibit a single jump discontinuity. The research shows that such maps exclusively present periodic orbits with periods of $n$, $n + 1$, $2n$ and $2n + 2$, with at least one of these orbits being attractive.

The piecewise linear maps are simpler examples than the square root map studied in \cite{reference2}. Some works in this direction where many bifurcation scenarios are considered can be found in \cite{reference4}, \cite{reference6}, \cite{reference10} and \cite{reference11}.

Our motivation is based on \cite{reference2} where the authors analyse continuous piecewise smooth maps composed by two parts, one linear part and the other of the order $\mathcal{O}(x^\gamma),$ with $\gamma\neq 1$. More specifically, they consider the family

\begin{equation}\label{hh}
	h(x,\mu_1,\mu_2)=\left\{\begin{array}{lr}
		\nu x+\mu_1,& x\ge0,\\
		&\\
		\nu_2|x|^{\gamma}+\mu_2,&x<0.
	\end{array}\right.	
\end{equation}

Observe that \eqref{hh} is continuous when $\mu_1 = \mu_2 = \mu$. The authors investigate the bifurcation of limit cycles relative to a particular case when $\gamma=1/2,$ called square root maps, and that are given by
\begin{equation}
g(\overline{x},\overline{\mu})=\left\{\begin{array}{lr}
		g_1(\overline{x},\overline{\mu})=\nu \overline{x},&H(\overline{x},\overline{\mu})=\overline{x}-\overline{\mu}\ge0,
		\\
		&\\
		g_2(\overline{x},\overline{\mu})=\sqrt{\overline{\mu}-\overline{x}}+\nu\overline{\mu},& H(\overline{x},\overline{\mu})=\overline{x}-\overline{\mu}\le0,\\
	\end{array}\right.	
\end{equation}
with $0<\nu<1$.
\vspace{0.3cm}

This kind of map is of great interest, since that it naturally appears in the study of problems related to bifurcation in impacting hybrid systems where the region $H(\bar{x}, \bar{\mu}) <0$ is called the impacting region. Our main purpose in this paper is to study a more general family where, in one side of the discontinuity boundary $\Sigma,$ the map is linear and has a term of the order $\mathcal{O}(x^{q/p}),$ with $p,q \in \mathbb{N},\,\,\,p>q,\,\,\textnormal{and}\,\,(p,q)=1$ on the other side of $\Sigma.$

In this direction we considered a rational degree map, $f:D\times\mathbb{R}\longrightarrow\mathbb{R}$, with $D\subset\mathbb{R}$, described by 
	
		\begin{equation}
			f(\overline{x},\overline{\mu})=\left\{\begin{array}{lr}
				f_1(\overline{x},\overline{\mu})=\nu\overline{x}+\alpha\overline{\mu},& H(\overline{x},\overline{\mu})=\overline{x}-\overline{\mu}\ge0,\\
				&\\
				f_2(\overline{x},\overline{\mu})=\nu \overline{x}+\alpha\overline{\mu}+e\sqrt[p]{(\overline{\mu}-\overline{x})^q},&H(\overline{x},\overline{\mu})=\overline{x}-\overline{\mu}\le0,
			\end{array}\right.	
		\end{equation}
where, $e>0$, $0<\nu<1$, $\mu\sim0^+,$ $p,q \in \mathbb{N}$ with $p>q$ and $(p,q)=1$.
Performing the translation in the coordinate $x$ and the parameter $\mu$ given by $\overline{x}=x-\dfrac{\alpha\overline{\mu}}{\nu}$, and $\mu=\overline{\mu}\left(1+\dfrac{\alpha}{\nu}\right)$ we obtain
$$x-\mu=\overline{x}+\dfrac{\alpha\overline{\mu}}{\nu}-\overline{\mu}\left(1+\dfrac{\alpha}{\nu}\right)=\overline{x}-\overline{\mu}.$$
Therefore, in the new $(x,\mu)$--coordinates the map $f$ becomes
	\begin{equation}\label{aplicationf}
	f(x,\mu)=\left\{\begin{array}{lr}
		f_1(x,\mu)=\nu x,& H(x,\mu)=x-\mu\ge0,\\
		&\\
		f_2(x,\mu)=\nu x+e\sqrt[p]{(\mu-x)^q},& H(x,\mu)=x-\mu\le0,
	\end{array}\right.
\end{equation}
where, $e>0$, $0<\nu<1$, $\mu\sim0^+,$ $p,q \in \mathbb{N}$ with $p>q$ and $(p,q)=1$.

We will study the bifurcation scenario of \eqref{aplicationf} in a neighborhood of the fixed point $x^*=0$ when $\mu^*=0.$ Observe that, associated to map \eqref{aplicationf} we have $\Sigma=\{(x,\mu)\in D\times\mathbb{R};\,\,\,H(x,\mu)=0\}=\{x=\mu\}$. More specifically, we will describe all the bifurcation scenarios that may occur in a neighborhood of the fixed point $x^*=0$ and $\mu^*=0$ when the parameter $\nu\in(0,1).$ It is important to observe that map \eqref{aplicationf} generalizes the square root maps given in Chapter 4 of \cite{reference2}. The main result of this paper is the following:

\begin{teo}\label{MainResult}
	Consider the one-dimensional family of piecewise smooth rational degree map given by \eqref{aplicationf}. This family admits, for $\nu\in(0,1),$ a stable fixed point at the origin for $\mu\leq0.$ Related to the dynamics of $f$ for $\mu\sim0^{+}$ we have:
	\begin{enumerate}
		\item If $\dfrac{p}{p+q}<\nu<1$, then there exists a robust chaotic motion close to the origin for all sufficiently small $\mu\sim0^+$.
		\item If $1-\dfrac{q(p+q)^{(p-q)/q}}{p^{\frac{p}{q}}}<\nu<\dfrac{p}{p+q},$
		then for all  sufficiently small $\mu\sim0^+$, family \eqref{aplicationf} performs a period doubling bifurcation at $$\left(\overline{z},\mu_{PD}\right)=\left(\dfrac{p}{p+q},e^{\frac{p}{p-q}}(p+q)\left(\dfrac{q^q}{p^p}\right)^{\frac{1}{p-q}}\nu^{\frac{p }{p-q}(M-1)} \right),$$
		where $M$ is the smallest natural number satisfying $f^{i}(\nu\mu,\mu)>\mu,\,\,\forall\;i\in \{1,2,..,M-1\}$ and $f^M(\nu\mu,\mu)<\mu$.

Moreover, the $M$--periodic orbit exist in terms of $\mu$ (associated with $(M-1)$ iterations in the linear part of the map $f$ and and one iteration in the non-linear part of this same map), that is, in terms of the parameter $\mu$, the stable fixed point of $G^{[0]}=G$ (given by the map \ref{auxG}) exists $\mu\in I_M$ where
$$\mu\in I_M=\left(\mu_{PD},\mu_1\right]=\left(e^{\frac{p}{p-q}}(p+q)\left(\dfrac{q^q}{p^p}\right)^{\frac{1}{p-q}}\nu^{\frac{p }{p-q}(M-1)},e^{\frac{q}{p-q}}(1-\nu)^{\frac{q}{p-q}}\nu^{\frac{p}{p-q}(M-2)} \right];$$	
	
		\item If $0<\nu<1-\dfrac{q(p+q)^{(p-q)/q}}{p^{\frac{p}{q}}}$, then all the periodic orbit of \eqref{aplicationf} are stable.
	\end{enumerate}	
\end{teo}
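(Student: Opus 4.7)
For $\mu\le 0$ the result is immediate: the origin lies in the linear branch where $f_1'(0)=\nu\in(0,1)$ gives exponential stability, and any orbit that enters $\{x<\mu\}$ is reflected into $\{x\ge\mu\}$ and then contracted linearly. For $\mu>0$ the main step is to reduce the dynamics near the origin to a first-return map. I would start from a point $y\le\mu$, compute $f_2(y,\mu)=\nu y+e(\mu-y)^{q/p}$, and observe that, since $q<p$, the $q/p$-power term dominates for small $\mu$ and throws the orbit into $\{x>\mu\}$. The iterate then contracts by a factor $\nu$ per step under $f_1$, re-entering $\{x<\mu\}$ after exactly $M-1$ iterations, where $M=M(\mu)$ is the integer of the statement. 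Composing yields the first-return map
\[
G(y)=\nu^M y+\nu^{M-1} e(\mu-y)^{q/p}.
\]
The endpoints of the admissibility interval $I_M$ are then identified with the grazing conditions under which $f_1^{k}(f_2(y,\mu))$ lands on $\Sigma$ for $k\in\{M-2,M-1\}$, producing the stated formula for $\mu_1$.

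Next I would analyse $G$ as a smooth one-parameter family on each $I_M$. Its fixed point $y^\ast$ satisfies $(1-\nu^M)y^\ast=\nu^{M-1}e(\mu-y^\ast)^{q/p}$, and the multiplier is $G'(y^\ast)=\nu^M-(q/p)\nu^{M-1}e(\mu-y^\ast)^{q/p-1}$. Imposing the period-doubling condition $G'(y^\ast)=-1$ and using the fixed-point relation to eliminate $(\mu-y^\ast)^{q/p}$ gives
\[
\mu-y^\ast=\tfrac{q(1-\nu^M)}{p(1+\nu^M)}\,y^\ast,
\]
and back-substitution, together with the fact that $\nu^M\to 0$ as $\mu\to 0^+$ (since $M\to\infty$), yields exactly the stated $\mu_{PD}$. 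Applying the same elimination to $|G'(y^\ast)|=1$ and taking $\nu^M\to 0$ collapses to an algebraic condition whose two critical $\nu$-values are precisely $p/(p+q)$ and $1-q(p+q)^{(p-q)/q}/p^{p/q}$, partitioning $(0,1)$ into the three advertised regimes.

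The three cases then follow. In case (3), the computation above shows $|G'(y^\ast)|<1$ uniformly over all $I_M$, so every $M$-periodic orbit is attracting. In case (2), $G'(y^\ast)$ crosses $-1$ transversely as $\mu$ moves through $\mu_{PD}\in I_M$, giving a generic flip bifurcation for each $M$; the nondegeneracy hypotheses are checked by direct differentiation of $G$, which is smooth in $y$ on the relevant side of $\Sigma$. In case (1), $|G'(y^\ast)|>1$ uniformly, and I would produce robust chaos by exhibiting a forward-invariant subinterval $J\subset I_M$ on which $G$ is piecewise expanding with $|G'|\ge\lambda>1$ and $G(J)\supseteq J$, concluding positive topological entropy via semi-conjugacy with a one-sided shift and ruling out attracting periodic windows by the uniform expansion bound.

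\textbf{Main obstacle.} The delicate point will be twofold. First, gluing the analyses across the countably many intervals $I_M$ as $\mu\to 0^+$ so that the asymptotic formulas for $\mu_{PD}$, $\mu_1$ and the critical $\nu$-values are consistent as $M$ varies and the $I_M$ shrink to zero. Second, and more seriously, the robust-chaos claim of case (1) cannot be imported from standard smooth bifurcation theory, because $G$ inherits the unbounded one-sided derivative of $f_2$ at $\Sigma$; this forces a direct expansiveness argument for $G$ and all its iterates, and the construction of the invariant interval $J$ must be made quantitative in $\mu$ so that the expansion constant $\lambda$ stays bounded away from $1$ throughout the whole parameter range of case (1).
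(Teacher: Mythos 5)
Your plan follows essentially the same route as the paper: reduce to the first-return map $x\mapsto \nu^{m}x+\nu^{m-1}e(\mu-x)^{q/p}$ on the trapping region $[\nu\mu,\mu]$, observe that at a fixed point the multiplier collapses (after discarding the $O(\nu^{M})$ terms, which the paper achieves by rescaling $z=x/\mu$ and introducing the auxiliary parameter $\lambda=(\nu\mu/F(\nu))^{p/q}$) to $-\tfrac{q}{p}\tfrac{z}{1-z}$, and read off the flip point $\overline{z}=p/(p+q)$, the value $\mu_{PD}$, the grazing endpoints of $I_M$, and the three $\nu$-regimes from that single formula. The only substantive differences are bookkeeping --- you keep the $\nu^{M}$ corrections explicit and pass to the limit at the end, whereas the paper truncates early and tracks the branches with non-maximal return time through the family $G^{[k]}$ --- and your shift semi-conjugacy argument for case (1) is, if anything, more demanding than the paper's appeal to boundedness plus a positive Lyapunov exponent.
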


\section{Preliminaries Results}
		
In this section we will present some preliminaries properties of family \eqref{aplicationf}. These properties will be useful for proving Theorem \ref{MainResult}.

For $\mu\sim 0^+$, consider the trapping region
\begin{equation}
W=\{x;\;\nu\mu\leq x\leq\mu\}.\label{trep}
\end{equation}
Also consider, for $\mu\sim 0^+$ the regions
\begin{equation}
\begin{array}{l}
		R_I=\{x;\,\,x\leq\mu\},\qquad \mbox{and}\qquad		
		R_{II}=\{x;\,\,x\geq\mu\}.
\end{array}\label{R12}
\end{equation}		
Observe that $W\subset R_I.$

Next proposition shows that from points of $W$ we can reach, by the action of map \eqref{aplicationf}, both regions $R_I$ and $R_{II}$.	

\begin{prop} Consider family \eqref{aplicationf} with $\mu\sim0^+$, $e>0$ and $0<\nu<1$. Then, there exist $\overline{x}_0,\,\, x_0\in W$ such that $\overline{x}_1=f(\overline{x}_0,\mu)\in R_{I}$ and $x_1=f(x_0,\mu)\in R_{II}.$ Moreover, for $x_0 \in W$ with $x_1 \in R_{II}$, there is $m(x_0,\mu)$ such that $x_m=f^m(x_0,\mu) \in W$.\label{proposicaom}
\end{prop}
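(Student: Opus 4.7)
The plan is to exploit the fact that, since $W \subset R_I$, any starting point $x_0 \in W$ is evaluated by the non-linear branch $f_2(x_0,\mu) = \nu x_0 + e(\mu-x_0)^{q/p}$, and then to test the behavior at the two endpoints $x=\mu$ and $x=\nu\mu$ of $W$. At $x_0=\mu$ the non-linear correction vanishes and we obtain $f(\mu,\mu)=\nu\mu$, which lies in $W\subset R_I$, so the choice $\overline{x}_0=\mu$ immediately witnesses the first statement. For the second half I would take $x_0=\nu\mu$, compute $f(\nu\mu,\mu)=\nu^{2}\mu + e(1-\nu)^{q/p}\mu^{q/p}$, and observe that because $q/p<1$ the term $\mu^{q/p}$ dominates $\mu$ as $\mu\to 0^{+}$; hence for all sufficiently small $\mu>0$, $f(\nu\mu,\mu)>\mu$, placing $x_1$ in $R_{II}$.

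For the final claim I would argue by an elementary geometric-decay argument on the linear branch. If $x_0\in W$ and $x_1=f(x_0,\mu)\in R_{II}$, then $x_1>\mu$ and, as long as the orbit remains in $R_{II}$, subsequent iterates are produced by $f_1$, giving $x_{i+1}=\nu x_i$, so that $x_i = \nu^{i-1}x_1$. Since $\nu\in(0,1)$, this sequence strictly decreases to $0$, so there exists a smallest integer $m\geq 2$ with $x_m<\mu$. By minimality, $x_{m-1}\geq \mu$, whence $x_m = \nu x_{m-1}\geq \nu\mu$. Combining both inequalities yields $x_m\in[\nu\mu,\mu)\subset W$, as desired.

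No real obstacle is expected: the argument is essentially a continuity-and-asymptotics check for the first part and a finite-geometric-progression argument for the second. The only subtle point is the quantifier on $\mu$ in the $R_{II}$ claim, where I would need to state explicitly that the inequality $\nu^{2}\mu + e(1-\nu)^{q/p}\mu^{q/p}>\mu$ holds for all $\mu$ in some neighborhood $(0,\mu_0)$ determined by $e$, $\nu$, $p$ and $q$, consistent with the standing hypothesis $\mu\sim 0^{+}$. I would also note in passing (for later use in Theorem~\ref{MainResult}) that this $m=m(x_0,\mu)$ is precisely the number of linear iterates required before the orbit re-enters the trapping region $W$, which is exactly the integer $M$ appearing in the period-doubling statement.
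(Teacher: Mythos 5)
Your proposal is correct and follows essentially the same route as the paper: evaluate $f$ at the two endpoints of $W$, use $f(\mu,\mu)=\nu\mu$ for the $R_I$ witness, and use the small-$\mu$ dominance of $e(1-\nu)^{q/p}\mu^{q/p}$ over $(1-\nu^2)\mu$ (valid precisely because $q<p$) for the $R_{II}$ witness. Where you differ is in the last step, and there your argument is actually \emph{tighter} than the paper's: the paper only observes that the dynamics in $R_{II}$ is contractive and asserts that some $m$ puts $\nu^{m}x_0+\nu^{m-1}e(\mu-x_0)^{q/p}$ into $W$, without explaining why the orbit cannot overshoot $W$ (i.e.\ drop from above $\mu$ to below $\nu\mu$ in a single step). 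Your minimality argument closes exactly that gap: taking $m$ minimal with $x_m<\mu$ gives $x_{m-1}\geq\mu$ and hence $x_m=\nu x_{m-1}\geq\nu\mu$, so $x_m\in[\nu\mu,\mu)\subset W$. One small caution on your closing remark: $m(x_0,\mu)$ is not in general the integer $M$ of Theorem~\ref{MainResult}; by Lemma~\ref{lemaM}, $M=m(\nu\mu,\mu)$ is the \emph{maximum} of $m(\cdot,\mu)$ over $W$, attained only at the left endpoint $x_0=\nu\mu$. This does not affect the validity of your proof of the proposition itself.
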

\begin{proof}
Note that for $\overline{x}_0=\mu$ we have
$$f(\mu,\mu)=f_1(\mu,\mu)=f_2(\mu,\mu)=\nu\mu<\mu.$$
Then, for $\overline{x}_0<\mu$ with $\overline{x}_0$ suficiently close to $\mu$, we have
$$\overline{x}_1=f(\overline{x}_0,\mu)=\nu \overline{x}_0+e\sqrt[p]{(\mu-\overline{x}_0)^q}\approx\nu\mu<\mu.$$
Therefore, from the continuity of $f,$ we have that in a neighborhood of $\mu$ denoted by $B_{\overline{\delta}}(\mu)$ we get
$$f(\overline{x}_0,\mu)<\mu, \forall\; \overline{x}_0 \in B_{\overline{\delta}}(\mu), \textnormal{with}\,\,\bar{x}_0<\mu.$$
On the other hand
$$f(\nu\mu,\mu)=\nu^2\mu+e\sqrt[p]{\mu^q}\sqrt[p]{(1-\nu)^q}.$$
In order to $f(\nu\mu,\mu)>\mu$ we must have
\begin{equation}
	e>\dfrac{\mu-\mu\nu^2}{\mu^{q/p}\sqrt[p]{(1-\nu)^q}}=\dfrac{\mu^{\frac{p-q}{p}}(1-\nu^2)}{\sqrt[p]{(1-\nu)^q}}.
	\end{equation}
Previous inequality guarantees that for $e>0$ fixed, $p>q,\,\,p,q\in\mathbb{N}$ and $\mu\sim0^{+}$ sufficiently small we get $f(\nu\mu, \mu)>\mu.$ This means that $f(\nu\mu, \mu) \in R_{II}$. 

Again, by the continuity of $f,$ there is $\delta>0$ such that if $x_0\in B_{\delta}(\nu\mu)$ and $x_0>\nu\mu,$ we have $x_1=f(x_0,\mu)>\mu,$ that means that $x_1\in R_{II}$.

Now, in $R_{II}$, function \eqref{aplicationf} is linear. So, for $x_0\in R_{I}$ with $x_1=f(x_0,\mu) \in R_{II}$ we have:
\begin{equation}\label{diagrama}
	\begin{array}{rl}
		x_1=f(x_0,\mu)=f_2(x_0,\mu)&=\nu x_0+ e\sqrt[p]{(\mu-x_0)^q},\\
		&\\
		x_2 =f(x_1,\mu)=f_1(x_1,\mu)&=\nu^2x_0+\nu e\sqrt[p]{(\mu-x_0)^q},\\
		&\\
		x_3=f(x_2,\mu)=f_1(x_2,\mu)&=\nu^3x_0+\nu^2 e\sqrt[p]{(\mu-x_0)^q},\\
		&\vdots\\
		x_m=f(x_{m-1},\mu)=f_1(x_{m-1},\mu)&=\nu^{m}x_0+\nu^{m-1} e\sqrt[p]{(\mu-x_0)^q}.
	\end{array}
\end{equation}
As in $R_{II}$ the dynamics is contractive ($0<\nu<1$), it is possible to take $m=m(x_0,\mu)$ sufficiently large, such that $f^m(x_0,\mu)=\nu^{m}x_0+\nu^{m-1} e\sqrt[p]{(\mu-x_0)^q} \in W$.

We define $m(x_0,\mu)$ as the first positive iteration of $x_0 \in W$, such that $f(x_0,\mu) \in R_{II}$, and $x_m=f^m(x_0,\mu)\in W.$
\end{proof}
			\begin{figure}[H]
			\centering
			\includegraphics[width=0.8\textwidth]{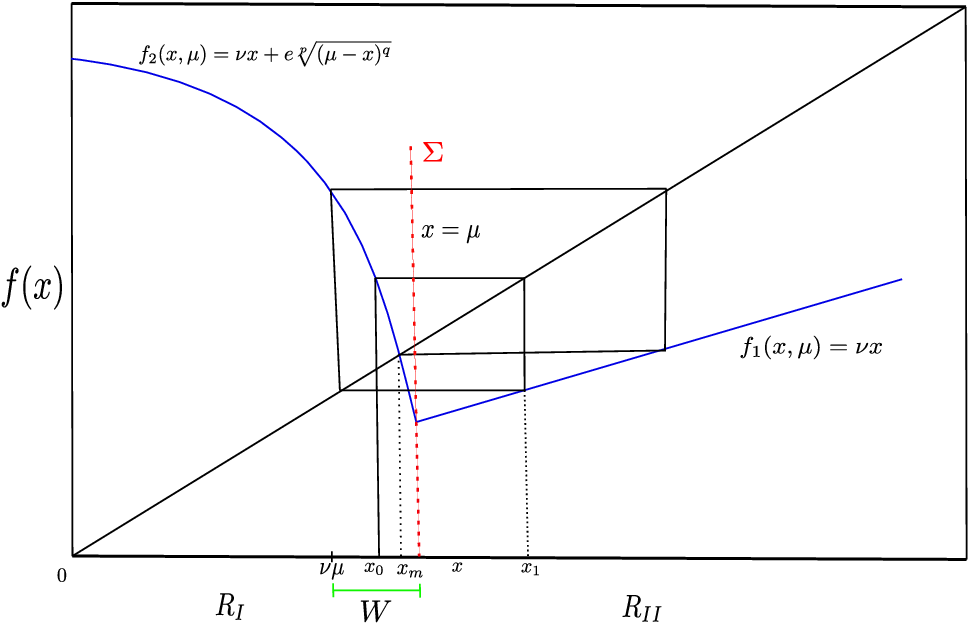}
			\caption{Dynamics of a point $x_0 \in W$ satisfying $x_1=f(x_0,\mu)\in R_{II}$ and $x_m=f^m(x_0,\mu)\in W$ where $m=m(x_0,\mu).$}
		\end{figure}

\begin{obss}\label{remF}
Observe that the trapping region $W$ given in \eqref{trep} is such that $W=[\nu\mu,\mu]=\mu[\nu,1].$ Therefore we can write a point $x_0 \in W$ into the form $x_0=\mu z$, with $z \in [\nu,1]$. Using this notation we induce from \eqref{diagrama}  the map
	\begin{equation}\label{F}
	\begin{array}{rll}
		F: [\nu,1] &\longrightarrow& W\\
		z & \mapsto &x_m=F(z)=F\left(\dfrac{x_0}{\mu}\right)=\nu^{m}\mu z+\nu^{m-1} e\sqrt[p]{\mu^q}\sqrt[p]{(1-z)^q},
	\end{array}
\end{equation}
with $z=\dfrac{x_0}{\mu},$ and $x_0\in W.$
\end{obss}

In which follows we will use the map $F$ given in \eqref{F} as an auxiliary function in order to study the bifurcation  scenarios that may occur in a neighborhood of the fixed point $x=0$ of $f(x,0)$ with $f$ given in \eqref{aplicationf}.

The following lemma guarantees that we can make the number of iterations $m(x_0,\mu)$ given in Proposition \ref{proposicaom} as large as we want.
		
		\begin{lem}With the notation of Proposition \ref{proposicaom}, for each fixed $\mu\sim 0^{+}$ function $m(x_0,\mu)$ is a decreasing function defined in $ [\nu\mu,\mu]$. Therefore its maximum $M$  occurs for $M(\mu)=m(\nu\mu,\mu)$. Moreover $M(\mu)$ can be taken large enough, if we take $\mu\sim0^+$ sufficiently small.\label{lemaM}
		\end{lem}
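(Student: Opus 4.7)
The plan is first to obtain a closed-form characterization of $m(x_0,\mu)$ from the iteration formula \eqref{diagrama}. Once $x_1=f_2(x_0,\mu)=\nu x_0+e\sqrt[p]{(\mu-x_0)^q}$ enters $R_{II}$, every subsequent iterate falls under the linear branch $f_1$, so $x_k=\nu^{k-1}x_1$ for $k\ge 1$. The defining condition $x_m\in W$ is therefore equivalent to $\nu^{m-1}x_1\le\mu$ (the lower bound $x_m\ge\nu\mu$ being automatic, since $x_{m-1}>\mu$ forces $x_m=\nu x_{m-1}>\nu\mu$), and so
$$m(x_0,\mu)=1+\left\lceil \dfrac{\log(x_1/\mu)}{\log(1/\nu)}\right\rceil.$$
This reduces the monotonicity of $m$ in $x_0$ to the monotonicity of $x_1=x_1(x_0,\mu)$ in $x_0$.

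The second step is to verify that the map $g(x_0):=\nu x_0+e\sqrt[p]{(\mu-x_0)^q}$ is strictly decreasing on $[\nu\mu,\mu]$ when $\mu$ is small. Differentiating gives
$$g'(x_0)=\nu-\dfrac{qe}{p}(\mu-x_0)^{q/p-1},$$
which is itself a decreasing function of $x_0$ because $q/p-1<0$. Thus it suffices to check that $g'(\nu\mu)<0$; but $g'(\nu\mu)=\nu-(qe/p)(\mu(1-\nu))^{q/p-1}$, and the second term blows up as $\mu\to 0^+$, so the inequality holds for all sufficiently small $\mu$. Consequently $x_1=g(x_0)$ decreases as $x_0$ increases on $[\nu\mu,\mu]$, and by the formula above $m(x_0,\mu)$ is a (weakly) decreasing step function of $x_0$ whose maximum is attained at the left endpoint $x_0=\nu\mu$; this yields $M(\mu)=m(\nu\mu,\mu)$.

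To finish, I would evaluate $g$ at $x_0=\nu\mu$ and extract the $\mu\to 0^+$ asymptotics. One computes
$$\dfrac{x_1(\nu\mu,\mu)}{\mu}=\nu^2+e(1-\nu)^{q/p}\,\mu^{q/p-1},$$
and since $q/p-1<0$ the right-hand side diverges as $\mu\to 0^+$. Plugging into the formula for $m$ yields
$$M(\mu)\;\sim\;\dfrac{(1-q/p)\,\log(1/\mu)}{\log(1/\nu)}\longrightarrow\infty,$$
so $M(\mu)$ can be made arbitrarily large by shrinking $\mu$. The only delicate step is the uniform sign control of $g'$ on the entire interval $[\nu\mu,\mu]$; once the monotonicity of $g'$ and the estimate at the left endpoint are in place, the remaining assertions follow immediately from the closed formula expressing $m$ in terms of $x_1/\mu$.
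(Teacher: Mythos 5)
Your argument is correct and follows essentially the same route as the paper: both reduce the monotonicity of $m$ in $x_0$ to the sign of the derivative of $x_0\mapsto \nu x_0+e\sqrt[p]{(\mu-x_0)^q}$, which is negative for small $\mu$ because the singular term $(\mu-x_0)^{q/p-1}$ dominates the constant $\nu$, and both obtain $M(\mu)\to\infty$ from the logarithmic count of linear iterates needed to bring $x_1$ back below $\mu$. Your explicit formula $m=1+\left\lceil \log(x_1/\mu)/\log(1/\nu)\right\rceil$ is in fact a slightly cleaner way to handle the integer-valuedness of $m$ than the paper's device of differentiating $m$ as a continuous function of $z$ with the landing point $\tilde{\mu}$ held fixed, but the computational core is identical.
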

	
			\begin{proof}
				Observe that for fixed $\mu\sim 0^{+}$, the number of iterates $m$ is a function of $x_0$, that is
				$$
				\begin{array}{lcll}
					m:& [\nu\mu,\mu]\times\{\mu\}&\longrightarrow&\mathbb{N}\\
					&  (x_0,\mu)  &\mapsto        &m (x_0,\mu).
				\end{array}
				$$	
From Remark \ref{remF} we know that
$$F(z)\in W \iff \nu^{m}\mu z+\nu^{m-1} e\sqrt[p]{\mu^q}\sqrt[p]{(1-z)^q}\in [\nu\mu,\mu].$$
Now observe that there is a $\tilde{\mu} \in [\nu\mu,\mu]$, with $\tilde{\mu}\sim 0^+$ and
$$\nu^{m}\mu z+\nu^{m-1} e\sqrt[p]{\mu^q}\sqrt[p]{(1-z)^q}=\tilde{\mu},$$
if and only if
	\begin{equation}
					\nu^{m-1}\left(\nu\mu z+e\sqrt[p]{\mu^q}\sqrt[p]{(1-z)^q}\right)=\tilde{\mu}.\label{mu}
	\end{equation}
Applying logarithm in expression \eqref{mu}, we obtain
\begin{equation}
	m=1+\dfrac{1}{log \nu}\left[log(\tilde{\mu})-log\left(\nu\mu z+e\sqrt[p]{\mu^{q}}\sqrt[p]{(1-z)^q}\right)\right].
\end{equation}
Thus, taking $\mu\sim 0^+$ fixed, we have
$$m(z,\mu)=1+\dfrac{1}{log \nu}\left[log(\tilde{\mu})-log\left(\nu\mu z+e\sqrt[p]{\mu^{q}}\sqrt[p]{(1-z)^q}\right)\right],$$
that implies
\begin{equation}
	m_z(z,\mu)=-\dfrac{\left[\nu\mu-\dfrac{q}{p}e\sqrt[p]{\mu^{q}}(1-z)^{\frac{q-p}{p}}\right]}{\left[\nu\mu z+e\sqrt[p]{\mu^{q}}\sqrt[p]{(1-z)^q}\right]\log \nu\ln 10}\label{m}
\end{equation}

Now, since $\log \nu<0$ for all $0<\nu<1$, it follows by \eqref{m}
$$
\begin{array}{rl}
m_z(z,\mu)<0\iff&\left[\nu\mu-\dfrac{q}{p}e\sqrt[p]{\mu^{q}}(1-z)^{\frac{q-p}{p}}\right]<0\\
&\\
\iff&\dfrac{q}{p}e\sqrt[p]{\mu^{q}}(1-z)^{\frac{q-p}{p}}>\nu\mu\\
&\\
\iff&e>\dfrac{p\nu\mu^{\frac{p-q}{p}}(1-z)^{\frac{p-q}{p}}}{q}.
\end{array}
$$
So, for $p>q,\,\,p,q\in\mathbb{N}$, for $e>0$ fixed and $\mu\sim0^{+}$ sufficiently small we get $m_z(z,\lambda)<0$, for all $z\in [\nu,1]$, with $z=\dfrac{x_0}{\mu}$. Therefore, for each fixed $\mu\sim0^+,$ $m$ is a decreasing function of $x_0.$

Therefore, for each fixed $\mu\sim0^+,$ $m$ is a decreasing function of $x.$
				
As $m(.,\mu)$ is a decreasing function in $[\nu\mu,\mu]$, it takes its maximum value in $\nu\mu.$ So, $M(\mu) =m(\nu\mu,\mu)$ is the maximum number of iterations such that the orbit of $\nu\mu \in W$ returns to $R_{I}$ at the point $\mathrm{x}_M=f^M(\nu\mu,\mu) \in R_{I}$. 

Now in order to show that we can make $M=m(\nu\mu,\mu)$ sufficiently large we observe that
$$
\begin{array}{rl}
F(z)\in W \iff&\nu\mu\leq\nu^{m}\mu z+\nu^{m-1} e\sqrt[p]{\mu^q}\sqrt[p]{(1-z)^q}\leq\mu\\
&\\
\iff&\nu\mu\leq\nu^{m-1}\left(\nu\mu z+ e\sqrt[p]{\mu^q}\sqrt[p]{(1-z)^q}\right)\leq\mu\\
&\\
\iff&\dfrac{\nu\mu}{\nu\mu z+ e\sqrt[p]{\mu^q}\sqrt[p]{(1-z)^q}}\leq\nu^{m-1}\leq\dfrac{\mu}{\nu\mu z+ e\sqrt[p]{\mu^q}\sqrt[p]{(1-z)^q}},
\end{array}
$$
where $z=\dfrac{x_0}{\mu}$ and $x_0\in W.$ Applying logarithm to the previous inequality, we have
\begin{equation}
\dfrac{1}{\log{\nu}}\log\left({\dfrac{\nu\mu}{\nu\mu z+ e\sqrt[p]{\mu^q}\sqrt[p]{(1-z)^q}}}\right)\geq m-1\geq\dfrac{1}{\log{\nu}}\log\left({\dfrac{\mu}{\nu\mu z+ e\sqrt[p]{\mu^q}\sqrt[p]{(1-z)^q}}}\right),	
\end{equation}
that implies that
$$m\geq1+\dfrac{1}{\log{\nu}}\log\left({\dfrac{\mu}{\nu\mu z+ e\sqrt[p]{\mu^q}\sqrt[p]{(1-z)^q}}}\right).$$
Therefore,
$$
m\geq1+\dfrac{1}{\log{\nu}}\log\left({\dfrac{1}{\nu z+ e\mu^{\frac{q-p}{p}}\sqrt[p]{(1-z)^q}}}\right),
$$
or equivalently
$$
m\ge1-\dfrac{1}{\log{\nu}}\log\left(\nu z+ \dfrac{e}{\mu^{\frac{p-q}{p}}}\sqrt[p]{(1-z)^q}\right).
$$
As, $e>0$, $1-z>0$ for $z\in[\nu,1],$ and $p>q$ it follows that $m\longrightarrow\infty$ as $\mu\sim0^+$, and we have the result.

			\end{proof}
		
We now return to the study of the dynamics of $F$ given in Remark \ref{remF}. This map has the form
	\begin{equation}
	\begin{array}{rll}
		F: [\nu,1] &\longrightarrow& W\\
		z & \mapsto &x_m=F(z)=F\left(\dfrac{x_0}{\mu}\right)=\nu^{m}\mu z+\nu^{m-1} e\sqrt[p]{\mu^q}\sqrt[p]{(1-z)^q},
	\end{array}
\end{equation}
where $z=\dfrac{x_0}{\mu}$ and $x_0\in W.$

In order to simplify the study of the bifurcation scenarios of $F$ we will take $k=M-m,$ where $M$ is given in Lemma \ref{lemaM} and introduce the new parameter
\begin{equation}\label{lambda}
	\lambda=\left(\dfrac{\nu}{\dfrac{F(\nu)}{\mu}}\right)^{p/q}.	
\end{equation}
Next lemma provides a useful information on the new parameter $\lambda.$
\begin{lem}\label{lemal}
The parameter $\lambda$ given in \eqref{lambda}, belongs to the interval $\left[{\nu^{p/q}},1\right]$, that is,
	$$\lambda=\left(\dfrac{\nu}{\dfrac{F(\nu)}{\mu}}\right)^{p/q} \in \left[\nu^{p/q},1\right].$$
	\end{lem}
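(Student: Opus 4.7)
The plan is to deduce the bounds on $\lambda$ directly from the fact that, by the very definition of the auxiliary map $F$, the value $F(\nu)$ lies in the trapping region $W=[\nu\mu,\mu]$. Everything else is just algebraic bookkeeping, so there is no genuine obstacle; the only points requiring care are that the exponent $p/q$ is positive (so that monotonicity is preserved when we raise to the $p/q$-th power) and that $F(\nu)/\mu>0$ so that taking reciprocals is safe.

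First I would recall from Remark \ref{remF} and Lemma \ref{lemaM} that, evaluating $F$ at the left endpoint $z=\nu$ corresponds to taking $x_0=\nu\mu$, and the exponent $m$ used in the definition of $F$ is precisely $M=M(\mu)=m(\nu\mu,\mu)$, chosen so that the orbit of $\nu\mu$ returns to $W$. Thus by construction
\[
F(\nu)=\nu^{M}\mu\nu+\nu^{M-1}e\sqrt[p]{\mu^{q}}\sqrt[p]{(1-\nu)^{q}}\in W=[\nu\mu,\mu].
\]

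Next I would divide by $\mu>0$ to obtain $\nu\le F(\nu)/\mu\le 1$. Since $F(\nu)/\mu>0$, taking reciprocals reverses the inequalities and gives $1\le \mu/F(\nu)\le 1/\nu$; multiplying by $\nu\in(0,1)$ yields
\[
\nu\;\le\;\frac{\nu}{F(\nu)/\mu}\;\le\;1.
\]
Finally, because $p,q\in\mathbb{N}$ with $p>q$ we have $p/q>0$, so the map $t\mapsto t^{p/q}$ is strictly increasing on $(0,\infty)$. Applying it to the previous chain yields
\[
\nu^{p/q}\;\le\;\left(\frac{\nu}{F(\nu)/\mu}\right)^{p/q}=\lambda\;\le\;1,
\]
which is precisely the claim $\lambda\in[\nu^{p/q},1]$.

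The only step that could conceivably need extra justification is the inclusion $F(\nu)\in W$: one might worry whether $\nu\mu\le F(\nu)$ is strict or attained, and whether Lemma \ref{lemaM}'s computation of $M$ as the number of iterates needed to return to $W$ actually places $F(\nu)$ in the closed interval $[\nu\mu,\mu]$. But this is exactly what the derivation of $m$ from the inequality $\nu\mu\le\nu^{m-1}(\nu\mu z+e\sqrt[p]{\mu^{q}}\sqrt[p]{(1-z)^{q}})\le\mu$ in the proof of Lemma \ref{lemaM} guarantees, so no extra work is needed here.
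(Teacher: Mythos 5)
Your proof is correct and follows essentially the same route as the paper: both arguments reduce the claim to the inclusion $F(\nu)\in W=[\nu\mu,\mu]$, divide by $\mu$ to get $\nu\le F(\nu)/\mu\le 1$, and then apply monotonicity of $t\mapsto t^{p/q}$. You merely spell out the intermediate reciprocal step and the justification of $F(\nu)\in W$ more explicitly than the paper does.
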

	\begin{proof}
As  $F(\nu) \in[\nu\mu,\mu]$ it follows that
$\nu\leq\dfrac{F(\nu)}{\mu}\leq1.$
Moreover, as $\nu>0$ we get
$$\nu^{p/q}\leq\left(\dfrac{\nu}{\dfrac{F(\nu)}{\mu}}\right)^{p/q}\leq1.$$
\end{proof}

Next lemma provides a simplification on the function $F$ in order to make the study of bifurcation scenarios more treatable.
\begin{lem}Let $F$ be given in \eqref{F}, $\lambda$ given by \eqref{lambda}, and $k=M-m$, where $M$ is given in Lemma \ref{lemaM}. Then $\dfrac{F(z)}{\mu }$ has the form
		\begin{equation}
		\dfrac{F(z)}{\mu}=\dfrac{\nu^{1-k}\sqrt[p]{(1-z)^q}}{\sqrt[p]{(1-\nu)^q}\sqrt[p]{\lambda^q}}+\nu^{M-k}\bigg(z-\nu\dfrac{\sqrt[p]{(1-z)^q}}{\sqrt[p]{(1-\nu)^q}}\bigg)=G^{[k]}(z,\lambda)+\nu^{M-k}\bigg(z-\nu\dfrac{\sqrt[p]{(1-z)^q}}{\sqrt[p]{(1-\nu)^q}}\bigg),\label{auxiliar}
	\end{equation}
for $k=0,1,2...$ and $z=\dfrac{x_0}{\mu}\in [\nu,1]$.\label{lemaG}
\end{lem}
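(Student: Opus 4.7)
The plan is a direct algebraic manipulation that exploits the fact that the parameter $\lambda$ defined in \eqref{lambda} is engineered precisely to encode the dimensionless coefficient of the nonlinear term appearing in $F$. First, I would divide the formula $F(z)=\nu^m\mu z+\nu^{m-1}e\sqrt[p]{\mu^q}\sqrt[p]{(1-z)^q}$ from \eqref{F} by $\mu$ and substitute $m=M-k$, so that
$$\frac{F(z)}{\mu}=\nu^{M-k}z+\nu^{M-k-1}e\,\mu^{(q-p)/p}\sqrt[p]{(1-z)^q}.$$

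Next, I would evaluate this identity at $z=\nu$, which by Lemma \ref{lemaM} corresponds to $m=M$, hence to $k=0$, and use the defining relation of $\lambda$ rewritten as $F(\nu)/\mu=\nu\,\lambda^{-q/p}$ to solve for the combination $e\,\mu^{(q-p)/p}\sqrt[p]{(1-\nu)^q}$ in terms of $\nu$, $\lambda$ and $M$ only. Substituting this expression back into the formula for $F(z)/\mu$ above and factoring the common ratio $\sqrt[p]{(1-z)^q}/\sqrt[p]{(1-\nu)^q}$ out of the two resulting nonlinear contributions produces exactly the decomposition claimed in \eqref{auxiliar}: the $\lambda^{-q/p}$ piece assembles into $G^{[k]}(z,\lambda)$, while the remaining piece combines with the linear term $\nu^{M-k}z$ to form the correction $\nu^{M-k}\bigl(z-\nu\sqrt[p]{(1-z)^q}/\sqrt[p]{(1-\nu)^q}\bigr)$.

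The only thing requiring care is the exponent bookkeeping: three distinct powers of $\nu$ appear (namely $M$, $k$, and the shift $\pm 1$ coming from the prefactor $\nu^{m-1}$ in $F$), and one must check that, after substitution, the spurious term $\nu^{M-k-1}\cdot\nu^2=\nu^{M-k+1}$ is exactly the factor needed to produce the $-\nu$ inside the parenthesis of the correction. No substantive obstacle is expected here; the lemma is essentially a change-of-variables statement, and its role is to replace the three dimensional quantities $e$, $\mu$, $m$ by the two dimensionless quantities $\lambda$, $k$ (with $M$ fixed by $\mu$), setting the stage for the bifurcation analysis of the reduced family $G^{[k]}(\cdot,\lambda)$ in the subsequent sections.
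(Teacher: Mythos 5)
Your proposal is correct and is essentially the paper's own argument run in the forward direction: the paper expands the two terms of the claimed right-hand side using $\lambda^{-q/p}=F(\nu)/(\nu\mu)$ and the explicit formula for $F(\nu)$, then adds them to recover $F(z)/\mu$, whereas you solve that same relation for $e\,\mu^{(q-p)/p}\sqrt[p]{(1-\nu)^q}$ and substitute into $F(z)/\mu$ directly. The exponent bookkeeping you flag does close up exactly as you predict ($\nu^{M-k-1}\cdot\nu^{2}=\nu^{M-k}\cdot\nu$ yields the $-\nu$ inside the parenthesis), so there is no gap.
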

\begin{proof}
From Lemma \ref{lemaM}, the maximum number of iterations $M(\mu)$ occurs at the point $x_0=\nu\mu.$ For this point we have $z=\dfrac{\nu\mu}{\mu}=\nu.$ Now from the first term on the right side of \eqref{auxiliar} we have
\begin{equation}
	\begin{array}{rl}
		\dfrac{\nu^{1-k}\sqrt[q]{(1-z)^p}}{\sqrt[p]{(1-\nu)^q}\sqrt[p]{\lambda^q}}=&\dfrac{1}{\mu}\dfrac{\nu^{1-k}\sqrt[p]{(1-z)^q}}{\sqrt[p]{(1-\nu)^q}}\dfrac{F(\nu)}{\nu},\\
		&\\
		=&\dfrac{1}{\mu}\dfrac{\nu^{-M+m}\sqrt[p]{(1-z)^q}}{\sqrt[p]{(1-\nu)^q}}\Bigg(\nu^M\nu\mu +\nu^{M-1}e\sqrt[p]{\mu^q}\sqrt[p]{(1-\nu)^q}\Bigg),\\
		&\\
		=&\dfrac{1}{\mu}\left(\nu^{m-1}e\sqrt[p]{\mu^q}\sqrt[p]{(1-z)^q}+\nu^{m+1}\mu\dfrac{\sqrt[p]{(1-z)^q}}{\sqrt[p]{(1-\nu)^q}}\right).\label{aux1}
	
	\end{array}
\end{equation}
On the other hand, from the second term on the right side of \eqref{auxiliar} we get
\begin{equation}
	\nu^{M-k}\bigg(z-\nu\dfrac{\sqrt[p]{(1-z)^q}}{\sqrt[p]{(1-\nu)^q}}\bigg)=\dfrac{1}{\mu}\left(\nu^{m}z\mu-\mu\nu^{m+1}\dfrac{\sqrt[p]{(1-z)^q}}{\sqrt[p]{(1-\nu)^q}}\right),\label{aux2}
\end{equation}
where $k=0,1,2....$. Now,  adding \eqref{aux1} and \eqref{aux2} we easily obtain \eqref{auxiliar}.
\end{proof}

Now consider the map
$$G:[\nu,1]\times\left[\nu^{p/q},1\right]\longrightarrow [\nu,1],$$
given by
\begin{equation}
	G(z,\lambda)=\dfrac{\nu\sqrt[p]{(1-z)^q}}{\sqrt[p]{(1-\nu)^q}\sqrt[p]{\lambda^q}}.\label{auxG}
\end{equation}
with, $z=\dfrac{x_0}{\mu}$ and $x_0\in W.$ 

\begin{obss}\label{obsG}
Note that from \eqref{auxG}, $G$ has a fixed point at the point $\overline{z},$ if and only if,
\begin{equation}
\dfrac{\overline{z}}{\sqrt[p]{(1-\overline{ z})^q}}=\dfrac{\nu}{\sqrt[p]{(1-\nu)^q}\sqrt[p]{\lambda^q}}.\label{condfixed}
\end{equation}
Moreover
\begin{equation}
	G_z(z,\lambda)=-\dfrac{q}{p}\dfrac{\nu}{\sqrt[p]{(1-\nu)^q}\sqrt[p]{(1-z)^{p-q}}\lambda^{q/p}}<0,\label{derG}
\end{equation}	
that means that $G(.,\lambda)$ is a strictly decreasing function. Also, as $G(\nu,\lambda)=\dfrac{F(\nu)}{\mu}\in[\nu,1]$ and $G(1,\lambda)=0$ it follows that $G(.,\lambda)$ has a single fixed point at $[\nu,1]$ (see Figure \ref{FiguraG}).
\end{obss}

		\begin{figure}[H]
		\centering
		\includegraphics[width=0.4\textwidth]{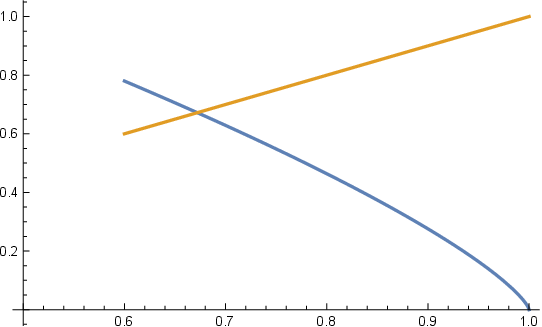}
		\caption{Graph of the identity and of function $G$ for $p=4,\;q=3,\;\nu=0.6$ and $\dfrac{1}{\lambda^{3/4}}=1.3$.}\label{FiguraG}
	\end{figure}

\begin{lem}
For all $\lambda \in\left[\nu^{p/q},1\right]$ and for all $z=\dfrac{x_0}{\mu} \in[\nu,1]$, if $\dfrac{p}{p+q}<\nu<1$, then
$$|G_z(z,\lambda)|>1.$$
Therefore, the only possible movement of the orbits of $G$ is chaotic.\label{lemachaos}
\end{lem}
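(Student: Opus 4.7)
The plan is to take the absolute value of the derivative formula \eqref{derG} and minimize it over the given rectangle $[\nu,1]\times[\nu^{p/q},1]$, then translate the strict inequality into the hypothesis $\nu>p/(p+q)$.

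Starting from
$$|G_z(z,\lambda)|=\frac{q}{p}\cdot\frac{\nu}{(1-\nu)^{q/p}\,(1-z)^{(p-q)/p}\,\lambda^{q/p}},$$
I would observe that since $p>q$, the factor $(1-z)^{(p-q)/p}$ is strictly decreasing in $z$ on $[\nu,1)$ and hence is \emph{maximized} at $z=\nu$; similarly $\lambda^{q/p}$ is maximized at $\lambda=1$. Both of these maximizations drive the denominator up, so $|G_z|$ attains its infimum over the prescribed rectangle at the corner $(z,\lambda)=(\nu,1)$. Substituting these values yields the clean expression
$$\inf_{(z,\lambda)}|G_z(z,\lambda)|=\frac{q}{p}\cdot\frac{\nu}{(1-\nu)^{q/p}(1-\nu)^{(p-q)/p}}=\frac{q\nu}{p(1-\nu)}.$$

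Next I would convert the condition that this infimum exceeds $1$ into the stated bound on $\nu$. Writing $\frac{q\nu}{p(1-\nu)}>1$ and using $0<\nu<1$ (so $1-\nu>0$), the inequality is equivalent to $q\nu>p-p\nu$, i.e.\ $(p+q)\nu>p$, i.e.\ $\nu>\dfrac{p}{p+q}$. This is precisely the hypothesis, so under it $|G_z(z,\lambda)|>1$ holds uniformly on the entire rectangle, as claimed.

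For the concluding sentence about chaotic dynamics, I would remark that the uniform expansion $|G_z|>1$ together with the fact (noted in Remark \ref{obsG}) that $G(\cdot,\lambda)$ is a strictly decreasing map of $[\nu,1]$ into itself with a unique fixed point prevents the existence of any asymptotically stable periodic orbit: if $\overline z$ is the fixed point then $|G_z(\overline z,\lambda)|>1$ makes it repelling, and more generally any $n$-periodic orbit has multiplier of modulus greater than $1$. I do not anticipate a real obstacle here; the only bookkeeping care needed is to check that the infimum is indeed attained at the corner $(\nu,1)$, which follows from the monotonicity in $z$ and $\lambda$ separately, both of which are immediate from the explicit formula.
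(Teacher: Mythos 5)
Your proposal is correct and follows essentially the same route as the paper: both reduce the problem to bounding $|G_z|$ below by its value at $z=\nu$, $\lambda=1$, observe that $(1-\nu)^{q/p}(1-\nu)^{(p-q)/p}=1-\nu$ so the minimum equals $\frac{q\nu}{p(1-\nu)}$, and note this exceeds $1$ exactly when $\nu>\frac{p}{p+q}$. The only cosmetic difference is that the paper performs the two monotonicity estimates (in $z$, then in $\lambda$) sequentially and closes the chaos claim by citing the Alligood--Sauer--Yorke definition (bounded orbits, positive Lyapunov exponent, no asymptotically periodic orbit), whereas you minimize at the corner in one step and phrase the conclusion via repelling periodic orbits.
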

\begin{proof}
Let $G$ be defined in \eqref{auxG}. Then for all $z=\dfrac{x_0}{\mu} \in[\nu_,1]$, we have
$$\nu\mu<x_0\iff \nu<\dfrac{x_0}{\mu}\iff 1-\nu<1-\dfrac{x_0}{\mu}\iff\dfrac{1}{1-\dfrac{x_0}{\mu}}>\dfrac{1}{1-\nu}.$$
Also we have
$$
\dfrac{1}{1-z}>\dfrac{1}{1-\nu}\iff\dfrac{1}{\sqrt[p]{(1-z)^{q}}}>\dfrac{1}{\sqrt[p]{(1-\nu)^{q}}}.
$$
Then, from \eqref{derG} it follows that
\begin{equation}\label{Gzv}
	\left|G_z(z,\lambda)\right|>\left|G_z(\nu,\lambda)\right|=\dfrac{q\nu}{p(1-\nu)\lambda^{q/p}}.
\end{equation}

Now for $\lambda \in[\nu^{p/q},1]$, we have $\dfrac{1}{\lambda^{q/p}}\in\left[1,\dfrac{1}{\nu}\right],$ that by \eqref{Gzv}  implies
\begin{equation}
|G_z(z,\lambda)|>\dfrac{q\nu}{p(1-\nu)\lambda^{q/p}}\in \left[\dfrac{q\nu}{p(1-\nu)},\dfrac{q}{p(1-\nu)}\right].\label{der1}
\end{equation}
Now,
\begin{equation}
\dfrac{q\nu}{p(1-\nu)}>1\iff\nu\left(p+q\right)>p\iff\nu>\dfrac{p}{p+q}.\label{der2}
\end{equation}

So, if $\dfrac{p}{p+q}<\nu<1$, it follows from \eqref{der1} and \eqref{der2} that
\begin{equation}
|G_z(z,\lambda)|>|G_z(\nu,\lambda)|=\dfrac{q\nu}{p(1-\nu)\lambda^{q/p}}>\dfrac{q\nu}{p(1-\nu)}>1,\,\,\forall \lambda\in\left[\nu^{p/q},1\right].\label{expan}
\end{equation}
This shows that the dynamics of $G(.,\lambda)$ in $[\nu,1]$ is repulsive. Therefore, $G$ has sensitive dependence on initial conditions. Moreover, we know that:

\begin{itemize}
\item[(i)] As $Im\,\,G\subset \left[\nu,1\right],$ the orbits of $G$ are bounded;

\item[(ii)] From \eqref{expan} the Lyapunov exponent of any orbit is positive and there is no asymptotically periodic orbit.

\end{itemize}

Then it follows from \textit{Definition 3.5} of \cite{reference13} that for $\dfrac{p}{p+q}<\nu<1,$ the orbit of $G$ are chaotic.
\end{proof}

Next lemma guarantees that, for $1-\dfrac{q(p+q)^{p-q/q}}{p^{\frac{p}{q}}}<\nu<1$, family \eqref{auxG} performs a period doubling bifurcation.

\begin{lem} Let $G$ be given by \eqref{auxG} with 
$1-\dfrac{q(p+q)^{p-q/q}}{p^{\frac{p}{q}}}<\nu<\dfrac{p}{p+q}.$ Then $G$ performs a period doubling bifurcation at $\left(\overline{z},\lambda_{PD}\right)=\left(\dfrac{p}{p+q},\dfrac{q(p+q)^{\frac{p-q}{q}}}{p^{\frac{p}{q}}}\dfrac{\nu^{\frac{p}{q}}}{1-\nu}\right).$\label{lemaperiod}
\end{lem}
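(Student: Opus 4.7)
The plan is to verify the three classical conditions characterizing a non-degenerate period-doubling (flip) bifurcation for the smooth one-parameter family $G(\cdot,\lambda)$ in \eqref{auxG}: (i) existence of a fixed point with $G_z(\overline z,\lambda_{PD})=-1$, (ii) transversal crossing of the multiplier through $-1$, and (iii) non-vanishing of the usual cubic coefficient.

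The flip point is located algebraically. Writing the fixed-point identity of Remark \ref{obsG} in the form $\dfrac{\overline z}{(1-\overline z)^{q/p}}=\dfrac{\nu}{(1-\nu)^{q/p}\lambda^{q/p}}$ and the multiplier condition from \eqref{derG} as $\dfrac{q\nu}{p(1-\nu)^{q/p}(1-\overline z)^{(p-q)/p}\lambda^{q/p}}=1$, I would divide the second equation by the first; all $\nu$- and $\lambda$-dependence cancels and one obtains $\dfrac{q}{p}\cdot\dfrac{\overline z}{1-\overline z}=1$, hence $\overline z=\dfrac{p}{p+q}$. Substituting back into the fixed-point equation and solving for $\lambda$ produces exactly the stated $\lambda_{PD}$. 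The admissibility condition $\lambda_{PD}\in\bigl[\nu^{p/q},1\bigr]$ furnished by Lemma \ref{lemal} then translates, using strict monotonicity of $\nu\mapsto\nu^{p/q}/(1-\nu)$, into the hypothesis $1-\dfrac{q(p+q)^{(p-q)/q}}{p^{p/q}}<\nu<\dfrac{p}{p+q}$.

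Transversality is straightforward: differentiating \eqref{derG} with respect to $\lambda$ gives $G_{z\lambda}(z,\lambda)>0$ throughout the parameter range, and the chain-rule computation of the derivative of the multiplier along the implicitly defined fixed-point branch $\overline z(\lambda)$ (which exists by the implicit function theorem since $G_z(\overline z,\lambda_{PD})=-1\neq 1$) then yields a strictly nonzero value at $\lambda_{PD}$, so the multiplier crosses $-1$ transversally.

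The main obstacle is the non-degeneracy condition $\mathcal{N}:=G_{zzz}(\overline z,\lambda_{PD})+\tfrac{3}{2}G_{zz}(\overline z,\lambda_{PD})^{2}\neq0$. Because $G$ is an explicit power of $1-z$, both $G_{zz}$ and $G_{zzz}$ factor as a constant depending on $(p,q,\nu,\lambda)$ times $(1-z)^{q/p-k}$; evaluating at $\overline z=p/(p+q)$ and $\lambda=\lambda_{PD}$ reduces $\mathcal{N}$ to a signed rational expression in $p$ and $q$ whose definite sign (strictly negative, thanks to the factor $(p-q)(2p-q)>0$ produced by differentiating the exponent $q/p$ twice) closes the argument and simultaneously determines the direction and stability of the bifurcating $2$-cycle.
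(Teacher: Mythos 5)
Your proposal is correct and follows essentially the same route as the paper: combine the fixed-point identity with the multiplier condition $G_z(\overline z,\lambda_{PD})=-1$ to isolate $\overline z=p/(p+q)$ (the paper does this by substituting \eqref{condfixed} into \eqref{derG} to get $G_z(\overline z,\lambda)=-q\overline z/(p(1-\overline z))$, which is the same cancellation you perform by dividing the two equations), solve for $\lambda_{PD}$, check it lies in $[\nu^{p/q},1]$, and then verify the transversality and cubic non-degeneracy conditions of the standard flip-bifurcation theorem (the paper's $K_1$ is, up to a factor of $2$, exactly your derivative of the multiplier along the fixed-point branch). Your cubic coefficient $\tfrac12 G_{zz}^2+\tfrac13 G_{zzz}$ is in fact the correctly stated one, and the explicit evaluation gives $-\tfrac{(p+q)^3(p-q)}{6p^2q^2}<0$, confirming the stable $2$-cycle exactly as the paper claims.
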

	\begin{proof}
From \eqref{auxG} we know that
$$G(z,\lambda)=\dfrac{\nu\sqrt[p]{(1-z)^q}}{\sqrt[p]{(1-\nu)^q}\sqrt[p]{\lambda^q}}\,\,.$$
Moreover, from Remark \ref{obsG} we have that if $\overline{z}$ is a fixed point of $G$ then
\begin{equation}
\dfrac{\overline{z}}{\sqrt[p]{(1-\overline{z})^q}}=\dfrac{\nu}{\sqrt[p]{(1-\nu)^q}\sqrt[p]{\lambda^q}}.\label{fixed}
\end{equation}
Substituting \eqref{fixed} into the expression of $G_z(z,\lambda)$ given in \eqref{derG} we get
$$
G_z(\overline{z},\lambda)=-\dfrac{q\nu}{p\sqrt[p]{(1-\nu)^q}\sqrt[p]{(1-\overline{z})^{p-q}}\lambda^{q/p}}=-\dfrac{q\overline{z}}{p(1-\overline{z})^{q/p}(1-\overline{z})^{(p-q)/p}},
$$
that implies,
\begin{equation}
	G_z(\overline{z},\lambda)=-\dfrac{q\overline{z}}{p(1-\overline{z})},\label{Gx}
\end{equation}
and $G_z(\overline{z},\lambda)$ depends only on the fixed point $\overline{z}$.

Now in order to find candidates for period doubling we must study $G_z(\overline{z},\lambda)=-1$. From equation \eqref{Gx}, under this condition, we have 
\begin{equation}
\overline{z}=\dfrac{p}{p+q}.\label{zbar}
\end{equation}

Also, from \eqref{zbar} and equation \eqref{condfixed} we can find the associated bifurcation parameter $\lambda_{PD}$ by solving 
$$\dfrac{p}{p+q}=\dfrac{\nu \sqrt[p]{\Bigg(1-\frac{p}{p+q}\Bigg)^q}}{\sqrt[p]{(1-\nu)^q}\sqrt[p]{\lambda_{PD}^q}},
$$
that gives
\begin{equation}\label{L}
	\lambda_{PD}=\dfrac{q(p+q)^{\frac{p-q}{q}}\nu^{\frac{p}{q}}}{p^{\frac{p}{q}}(1-\nu)}.
\end{equation}
Moreover, as $\nu>1-\dfrac{q(p+q)^{(p-q)/q}}{p^{\frac{p}{q}}}$ a direct computation shows that $\lambda_{PD}>\nu^{p/q}.$ Therefore, from Lemma \eqref{lemal} $\lambda_{PD}$ is inside the interval of definition of the parameter $\lambda.$

Now, in order to apply Theorem \ref{FlipBifurcations} of Appendix we have to compute coefficients $K_1$ and $K_2.$

First it is not difficult to see that for all
$\nu \in \left(1-\dfrac{q(p+q)^{p-q/q}}{p^{\frac{p}{q}}}<\nu<\dfrac{p}{p+q}\right)$, we have
$$
\begin{array}{rl}
	K_1=&G_{\lambda}(\overline{z}, \lambda_{PD})G_{zz}(\overline{z}, \lambda_{PD})+2G_{\lambda z}(\overline{z}, \lambda_{PD})\\
	&\\
	=&\dfrac{1}{p^2} \left[\nu q^2(1 - \nu)^{-\frac{2q}{p}}\lambda_{PD}^{-\left(1 + 2 \frac{q}{p}\right) }\left(\dfrac{q}{p+q}\right)^{\left(-2+2\frac{q}{p}\right)}\underbrace{\left(\nu-(1-\nu)^{\frac{q}{p}}\left(\dfrac{q}{p+q}\right)^{\frac{p-q}{p}}\lambda_{PD}^{\frac{^q}{p}}\right)}_\beta\right]	\neq 0\\
\end{array}
$$

In fact, from the previous equality it follows that $K_1=0$ if and only if $\beta=0.$ However, $\beta=0$ if and only if $\lambda_{PD}=\dfrac{\nu^{\frac{p } {q }}}{(1-\nu)\left(\dfrac{q}{p+q}\right)^{\frac{p-q}{q}}}$, that does not occur due to \eqref{L}.

On the other hand, $4\nu\left(p-\dfrac{q}{2}\right)(p+q)^{\frac{1}{q}}>3\nu(p-q)(p+q)^{\frac{1}{p}},$ for all $\nu \in \left(1-\dfrac{q(p+q)^{p-q/q}}{p^{\frac{p}{q}}}, \dfrac{p}{p+q}\right),$ $p,q\in \mathbb{N}$ and $p>q$. Thus
	$$
	\begin{array}{rl}
	K_2=&\dfrac{1}{2}G_{zz}(\overline{z}, \lambda_{PD})^2+\dfrac{1}{3}G_{zzz}(\overline{z}, \lambda_{PD})^3\\
	&\\
	=&-4\nu\left(p-\dfrac{q}{2}\right)q^{\frac{q}{p}}(p+q)^{\frac{p-q}{q}}+3\nu(p-q)q^{\frac{q}{p}}(p+q)^{\frac{p-q}{p}}<0.
	\end{array}
$$
		
So it follows from Theorem \ref{FlipBifurcations} pf Appendix that there exists a smooth curve of fixed points of $G$ passing through $\left(\overline{z},\lambda_{PD}\right)=\left(\dfrac{p}{p+q},\dfrac{q(p+q)^{\frac{p-q}{q}}\nu^{\frac{p}{q}}}{p^{\frac{p}{q}}(1-\nu)}\right)$ with the fixed point changing its stability in $\left(\overline{z},\lambda_{PD}\right).$ 
Moreover, there is a smooth curve $\gamma$, passing through $\left(\overline{ z},\lambda_{PD}\right)=\left(\dfrac{p}{p+q},\dfrac{q(p+q)^{\frac{p-q}{q}}\nu^{\frac{p}{q}}}{p^{\frac{p}{q}}(1-\nu)}\right)$ such that $\gamma-\{\left(\overline{ z},\lambda_{PD}\right)\}$ is the union of stable hyperbolic 2-periodic orbits. This concludes the result.
\end{proof}
%
%
%
Next Lemma provides the value of the parameter $\mu$ associated with $\lambda_{PD}$ by equation \eqref{lambda}.

\begin{lem}\label{mu_PD}
The map $G$ given by equation \eqref{auxG} performs a period doubling bifurcation for $G\left(\overline{z},\lambda_{PD}\right)$ where $\lambda_{PD}$ and $\mu=\mu_{PD}$ related by the equation \eqref{lambda} satisfies
$$\mu=\mu_{PD}=\left(\dfrac{\nu}{\nu^{M-1}e\sqrt[p]{(1-\nu)^q}\sqrt[p]{\lambda_{PD}^q}}\right)^{\frac{p}{q-p}},$$ and $\lambda_{PD}$ is given in Lemma \eqref{lemaperiod}. Moreover, this value of parameter corresponds to
$$
\mu_{PD}=e^{\frac{p}{p-q}}(p+q)\left(\dfrac{q^q}{p^p}\right)^{\frac{1}{p-q}}\nu^{\frac{p }{p-q}(M-1)}.
$$
\begin{proof}
From \eqref{lambda} we have
$$\lambda=\left(\dfrac{\nu}{\frac{F(\nu)}{\mu}}\right)^{p/q}.$$
So the bifurcation value $\mu=\mu_{PD}$ satisfy 
$$\lambda_{PD}=\left(\dfrac{\nu}{\frac{F(\nu)}{\mu_{PD}}}\right)^{p/q},$$
and $\lambda_{PD}$ is the parameter associated with the period doubling bifurcation of map $G$, given by the Lemma \ref{lemaperiod}$\,\,$.

Substituting the expression of $F$ given in \eqref{F} and of $\lambda_{PD}$ given in \eqref{L} in the previous equation we get
$$\begin{array}{rl}
	\mu=\mu_{PD}=&\left(\dfrac{\nu}{\nu^{M-1}e\sqrt[p]{(1-\nu)^q}\sqrt[p]{\lambda_{PD}^q}}\right)^{\frac{p}{q-p}}\\
	&\\
	=&\left(\dfrac{\nu}{\nu^{M-1}e\sqrt[p]{(1-\nu)^q}\dfrac{q^{\frac{q}{p}}(p+q)^{\frac{p-q}{p}}\nu}{p\sqrt[p]{(1-\nu)^q}}}\right)^{\frac{p}{q-p}}
\end{array}.
$$
A direct computation provides
$$\mu_{PD}=e^{\frac{p}{p-q}}(p+q)\left(\dfrac{q^q}{p^p}\right)^{\frac{1}{p-q}}\nu^{\frac{p }{p-q}(M-1)}.$$
\end{proof}

\end{lem}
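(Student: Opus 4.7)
The plan is to invert the change-of-parameter relation \eqref{lambda} at the period-doubling value $\lambda = \lambda_{PD}$ and then substitute the explicit form of $\lambda_{PD}$ from Lemma \ref{lemaperiod}. The starting point is the observation that by Lemma \ref{lemaM}, the point $z = \nu$ corresponds to the maximal iteration count $m = M$, so that $F(\nu)$ has a clean closed form from \eqref{F}, namely $F(\nu) = \nu^{M+1}\mu + \nu^{M-1}e\,\mu^{q/p}(1-\nu)^{q/p}$. Dividing by $\mu$ and equating to the expression $\nu\lambda_{PD}^{-q/p}$ coming from \eqref{lambda} gives one equation for $\mu = \mu_{PD}$.

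For $\mu \sim 0^+$ and $p > q$, the sublinear term $\mu^{q/p}$ dominates the linear one (indeed $\mu^{q/p}/\mu = \mu^{-(p-q)/p} \to \infty$), so to leading order one keeps only the second summand of $F(\nu)$ and is left with the balance $\nu^{M-1}e(1-\nu)^{q/p}\mu^{q/p} = \mu\,\nu\,\lambda_{PD}^{-q/p}$. Dividing through by $\mu$, isolating the $\mu^{(p-q)/p}$ factor, and raising both sides to the power $p/(p-q) = -p/(q-p)$ immediately yields the first stated identity $\mu_{PD} = (\nu/(\nu^{M-1}e(1-\nu)^{q/p}\lambda_{PD}^{q/p}))^{p/(q-p)}$. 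Consistency with the period-doubling analysis of $G$ is built in: at $z = \nu$, the correction term in Lemma \ref{lemaG} vanishes identically, so the reduced map $G$ and the true $F(\cdot)/\mu$ agree there, and the bifurcation of $G$ truly translates to a value of $\mu$.

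The closed-form expression then requires only a careful substitution of $\lambda_{PD} = q(p+q)^{(p-q)/q}\nu^{p/q}/(p^{p/q}(1-\nu))$ and simplification. The key simplification is that $\lambda_{PD}^{q/p}$ carries a factor $1/(1-\nu)^{q/p}$, which cancels exactly against the $(1-\nu)^{q/p}$ in the outer expression, so the final $\mu_{PD}$ depends on $\nu$ only through the monomials $\nu$ and $\nu^{M-1}$. The main obstacle is the exponent bookkeeping when one raises everything to the $p/(p-q)$-th power: one must verify that $(p+q)^{(p-q)/p}$ becomes the bare factor $p+q$, that the $q^{q/p}$ and $p$ factors combine into $(q^q/p^p)^{1/(p-q)}$, and that the prefactor $\nu^{M-1}$ lifts to $\nu^{p(M-1)/(p-q)}$. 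Once this routine algebra is executed, the advertised formula $\mu_{PD} = e^{p/(p-q)}(p+q)(q^q/p^p)^{1/(p-q)}\nu^{p(M-1)/(p-q)}$ drops out.
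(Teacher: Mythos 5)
Your proposal is correct and follows essentially the same route as the paper: invert the relation \eqref{lambda} at $\lambda=\lambda_{PD}$ using the closed form of $F(\nu)$ from \eqref{F}, then substitute the expression \eqref{L} for $\lambda_{PD}$ and track the exponents. The only difference is that you explicitly justify discarding the subdominant linear summand $\nu^{M+1}\mu$ of $F(\nu)$ as $\mu\to 0^+$, a step the paper's computation performs silently; your bookkeeping of the resulting factors $(p+q)$, $(q^q/p^p)^{1/(p-q)}$ and $\nu^{p(M-1)/(p-q)}$ matches the stated formula.
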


\begin{obss}\label{MeM-1orbit}
As we will see in the next section, a fixed point of $G$ given by
$$G=G^{[0]}=\dfrac{\nu^{1-0}\sqrt[p]{(1-z)^q}}{\sqrt[p]{(1-\nu)^q}\sqrt [p]{\lambda^q}}=\dfrac{\nu\sqrt[p]{(1-z)^q}}{\sqrt[p]{(1-\nu)^q}\sqrt [p]{\lambda^q}},$$ will be related to a $M$-periodic orbit of \eqref{aplicationf} in the sense that it is a maximal orbit of \eqref{aplicationf} having an iterate in the region where \eqref{aplicationf} is a rational degree map and $(M-1)$--iterates in the region where it is linear.

Moreover, by Lemma \ref{mu_PD}, the bifurcation parameter $\mu=\mu_{PD}$ where $G$ performs a period doubling bifurcation is given by $\lambda_{PD}$ that is associated to
$$\mu_{PD}=\mu_{PD}^0=e^{\frac{p}{p-q}}(p+q)\left(\dfrac{q^q}{p^p}\right)^{\frac{1} {p-q}}\nu^{\frac{p }{p-q}(M-1)},$$ by equation \eqref{lambda}.

In a similar way, for a $(M-k)$--periodic orbit of \eqref{aplicationf} with one iterate in the rational degree region and $M-2$ iterates in the linear region, we have $k=M-m=M-(M-k)=1$, $k=0,1,2,...$. As in the previous case, we will also see that, by equation \eqref{auxiliar} of Lemma \ref{lemaG}, this orbit will be associated with a fixed point of the map
$$G^{[k]}=\dfrac{\nu^{1-k}\sqrt[p]{(1-z)^q}}{\sqrt[p]{(1-\nu)^ q}\sqrt[p]{\lambda^q}},$$

By a direct computation, as in Lemmas \ref{lemaperiod} and \ref{mu_PD} we can see that $G^{[k]}$ performs a period doubling bifurcation for a $\lambda_{PD}^k$ that is associated to

$$\mu_{PD}^k=e^{\frac{p}{p-q}}(p+q)\left(\dfrac{q^q}{p^p }\right)^{\frac{1}{p-q}}\nu^{\frac{p }{p-q}(M-1-k)},$$  by equation \eqref{lambda}.

An important fact, that also will be verified in the proof of the main theorem in the next section, is that the qualitative behaviour of the maps
$$G^{[k]}=\dfrac{\nu^{1-k}\sqrt[p]{(1-z)^q}}{\sqrt[p]{(1-\nu)^ q}\sqrt[p]{\lambda^q}},$$
are similar for $k=0,1,2,...$.
\end{obss}

Next proposition exhibit the intervals in terms of $\mu$ on which the fixed points of $G^{[k]}$ exist. Moreover it guarantees that the intervals $I_M$ (related to the fixed point of $G^{[0]}$) and the intervals $I_{M-k}$ (related to the fixed point fo $G^{[k]},$ with $k=1,2,...$) given by Remark \ref{MeM-1orbit}, are disjoint.

\begin{prop}\label{PropMeM-1} In terms of the parameter $\mu$, the stable fixed point of $G^{[0]}=G$ exists for $\mu\in I_M$ where
	$$\mu\in I_M=\left(\mu_{PD},\mu_1\right]=\left(e^{\frac{p}{p-q}}(p+q)\left(\dfrac{q^q}{p^p}\right)^{\frac{1}{p-q}}\nu^{\frac{p }{p-q}(M-1)},e^{\frac{q}{p-q}}(1-\nu)^{\frac{q}{p-q}}\nu^{\frac{p}{p-q}(M-2)} \right].$$ 
Furthermore, considering $I_{M-k}$ with $k=1,2,...$, the interval of $\mu$ where $G^{[k]}$ admits a fixed point is such that $I_M\cap I_{M-k}=\emptyset.$ Moreover, their width are in
geometric progression in the sense that $|C_M| = \nu^{{\frac{pk}{p-q}}}|C_{M-1}|$, with $k=1,2,...$.
	
\begin{proof}
We know that the fixed point of $G$ exists for $\lambda=(\lambda_{PD},1].$ In terms of $\mu$ it is equivalent to $\mu\in(\mu_{PD},\mu_1]$ where $\mu_{PD}$ is given by Lemma \ref{mu_PD} and $\mu_1$ is given by equation \ref{lambda} with $\lambda$ substituted by $1.$

Now, from Lemma \eqref{mu_PD} we have $$\mu_{PD}=e^{\frac{p}{p-q}}(p+q)\left(\dfrac{q^q}{p^p}\right)^{\frac{1}{p-q}}\nu^{\frac{p }{p-q}(M-1)}.$$ On the other hand, it follows from \eqref{lambda} and \eqref{F} that
$$1=\lambda=\left(\dfrac{\nu}{\nu^{M-1}\mu_1^{\frac{q-p}{p}}e\sqrt[p]{(1-\nu)^q }}\right)^{p/q},$$ that implies that 
\begin{equation}\label{Iright}
\mu_1=e^{\frac{q}{p-q}}(1-\nu)^{\frac{q}{p-q}}\nu^{\frac{p}{p-q}(M-2)}.
\end{equation}

Therefore, $G$ admits a fixed point for
		$$\mu\in I_M=\left(\mu_{PD},\mu_1\right]=\left(e^{\frac{p}{p-q}}(p+q)\left(\dfrac{q^q} {p^p}\right)^{\frac{1}{p-q}}\nu^{\frac{p}{p-q}(M-1)},e^{\frac{q}{p-q}}(1-\nu)^{\frac{q}{p-q}}\nu^{\frac{p}{p-q}(M-2)}\right].$$
		
Similarly, by studying map $G^{[k]}$ we have that the interval associated with its fixed point is
$$\mu\in I_{M-k}=\left(e^{\frac{p}{p-q}}(p+q)\left(\dfrac{q^q} {p^p}\right)^{\frac{1}{p-q}}\nu^{\frac{p}{p-q}(M-1-k)},e^{\frac{q}{p-q}}(1-\nu)^{\frac{q}{p-q}}\nu^{\frac{p}{p-q}(M-2-k)}\right],$$
with $k=1,2,...$.

Since $1-\dfrac{q(p+q)^{p-q/q}}{p^{p+q}} < \nu < \dfrac{p}{p+q}$, these intervals do not overlap, that is, $I_M\cap I_{M-1} =\emptyset.$ This occurs because

$$e^{\frac{q}{p-q}}(1-\nu)^{\frac{q}{p-q}}\nu^{\frac{p}{p-q}(M-2)}<e^{\frac{p}{p-q}}(p+q)\left(\dfrac{q^q} {p^p}\right)^{\frac{1}{p-q}}\nu^{\frac{p}{p-q}(M-1-k)}\iff 1-\dfrac{q(p+q)^{p-q/q}}{p^{\frac{p}{q}}}<\nu<\nu^{-k},$$
for all $0<\nu<1$ and  $k=1,2,...$.

Furthermore, if $C_M$ and $C_{M-k}$ denotes respectively the length of the interval $I_M$ and $I_{M-k}$ with $k=1,2,...$, then 
$$
\begin{array}{rl}
\left|C_{M-k}\right|=&\nu^{\frac{-pk}{p-q}}\left(e^{\frac{q}{p-q}}(1-\nu)^{\frac{q}{p-q}}\nu^{\frac{p}{p-q}(M-2)}-e^{\frac{p}{p-q}}(p+q)\left(\dfrac{q^q} {p^p}\right)^{\frac{1}{p-q}}\nu^{\frac{p}{p-q}(M-1)}\right)\\
&\\
=&\nu^{\frac{-pk}{p-q}}\left|C_{M}\right|,	
\end{array}
$$
that means that its width are in geometric progression respecting
$$\left|C_{M}\right|=\nu^{\frac{pk}{p-q}}\left|C_{M-k}\right|,$$
with $k=1,2,...$
\end{proof}
	
\end{prop}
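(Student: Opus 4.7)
The plan is to derive the endpoints of $I_M$ from the range of admissible $\lambda$ values, then adapt the same argument to $G^{[k]}$ and finally verify the disjointness and geometric-progression claims by direct comparison of endpoints.

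First, I would recall from Lemma \ref{lemal} that $\lambda \in [\nu^{p/q},1]$, and from Lemma \ref{lemaperiod} and Lemma \ref{mu_PD} that $G=G^{[0]}$ admits a stable fixed point for $\lambda\in(\lambda_{PD},1]$. Through the change of variables \eqref{lambda}, this translates into an interval in $\mu$. The left endpoint $\mu_{PD}$ is furnished directly by Lemma \ref{mu_PD}. For the right endpoint, I would set $\lambda=1$ in \eqref{lambda} using the explicit expression of $F(\nu)$ from \eqref{F}, namely
$$1=\left(\dfrac{\nu}{\nu^{M-1}\mu_1^{(q-p)/p}e\sqrt[p]{(1-\nu)^q}}\right)^{p/q},$$
and solve for $\mu_1$, obtaining the announced value $\mu_1=e^{q/(p-q)}(1-\nu)^{q/(p-q)}\nu^{\frac{p}{p-q}(M-2)}$.

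Next, as Remark \ref{MeM-1orbit} indicates, the map $G^{[k]}$ differs from $G^{[0]}$ only through the prefactor $\nu^{1-k}$ in place of $\nu$. Repeating the argument of Lemmas \ref{lemaperiod} and \ref{mu_PD} with $\nu$ replaced by $\nu^{1-k}$ in this prefactor (while leaving the term $(1-\nu)$ unchanged since it still measures the geometry of the trapping window at $z=\nu$) yields the analogous interval $I_{M-k}$ with $M-1$ shifted to $M-1-k$ and $M-2$ shifted to $M-2-k$. This is the mechanical part; I would simply indicate that the computation is identical modulo this shift and write down the resulting endpoints.

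To establish $I_M\cap I_{M-k}=\emptyset$ I would check that the right endpoint of $I_M$ lies strictly below the left endpoint of $I_{M-k}$, i.e. that
$$e^{q/(p-q)}(1-\nu)^{q/(p-q)}\nu^{\frac{p}{p-q}(M-2)}<e^{p/(p-q)}(p+q)\Bigl(\tfrac{q^q}{p^p}\Bigr)^{1/(p-q)}\nu^{\frac{p}{p-q}(M-1-k)}.$$
Cancelling the common $e$-power and rearranging the $\nu$-powers reduces this inequality to the two conditions displayed in the statement, namely $1-\tfrac{q(p+q)^{(p-q)/q}}{p^{p/q}}<\nu$ (which handles the $e$ and combinatorial factor) and $\nu<\nu^{-k}$, equivalent to $0<\nu<1$. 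Since both hold in our parameter range, the intervals are disjoint.

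Finally, for the length relation I would subtract the endpoints of $I_{M-k}$, factor out the common $\nu^{-pk/(p-q)}$ relative to the corresponding difference for $I_M$, and read off
$$|C_{M-k}|=\nu^{-pk/(p-q)}|C_M|,\qquad k=1,2,\ldots,$$
which is the geometric progression claimed. The only real obstacle is the bookkeeping in the disjointness inequality: one must be careful with the direction of the inequality when dividing by $\nu^{p(M-2-k)/(p-q)}$ (positive, so direction preserved) and with the equivalence between the algebraic inequality and the hypothesis on $\nu$. Everything else is direct substitution from the preceding lemmas.
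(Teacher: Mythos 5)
Your proposal follows the paper's own proof essentially step for step: you obtain $\mu_{PD}$ from Lemma \ref{mu_PD}, get $\mu_1$ by setting $\lambda=1$ in \eqref{lambda} with $F(\nu)$ from \eqref{F}, produce $I_{M-k}$ by the shift $M\mapsto M-k$, and establish disjointness and the ratio $|C_{M-k}|=\nu^{-pk/(p-q)}|C_M|$ by direct endpoint comparison, exactly as the paper does. The one caveat (which your write-up shares with the paper) is that in the disjointness inequality the two sides carry $e^{q/(p-q)}$ and $e^{p/(p-q)}$ respectively, so the ``common $e$-power'' does not fully cancel: for $k=1$ the comparison actually reduces to $\nu\ge 1-e^{(p-q)/q}\,q(p+q)^{(p-q)/q}/p^{p/q}$, which agrees with the stated condition $1-q(p+q)^{(p-q)/q}/p^{p/q}<\nu$ only after normalizing $e=1$, a dependence neither you nor the paper tracks.
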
	

Next lemma characterizes the dynamics of map $G$ given in \eqref{auxG} for $0<\nu<1-\dfrac{q(p+q)^{p-q/q}}{p^{\frac{p}{q}}}.$

\begin{lem} Let $G$ be defined in  \eqref{auxG}. If $0<\nu<1-\dfrac{q(p+q)^{p-q/q}}{p^{\frac{p}{q}}}$, then all the fixed points of $G$ are stable.\label{lemastable}
\end{lem}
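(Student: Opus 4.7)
The plan is to reduce the stability question to the explicit condition $\overline{z}<p/(p+q)$ derived in Lemma~\ref{lemaperiod}, and then to compare the period--doubling parameter $\lambda_{PD}$ with the left endpoint of the admissible interval $[\nu^{p/q},1]$ for $\lambda$.

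First, I would invoke the computation from the proof of Lemma~\ref{lemaperiod}, which after substituting the fixed--point relation \eqref{condfixed} into \eqref{derG} gives
$$G_z(\overline{z},\lambda)=-\dfrac{q\,\overline{z}}{p(1-\overline{z})}.$$
Since $G(\cdot,\lambda)$ is strictly decreasing (Remark~\ref{obsG}) and $\overline{z}\in[\nu,1)\subset(0,1)$, the stability condition $|G_z(\overline{z},\lambda)|<1$ is equivalent to
$$\overline{z}<\dfrac{p}{p+q}.$$

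Second, I would examine how $\overline{z}$ depends on $\lambda$ through \eqref{condfixed}. The left side $\overline{z}/(1-\overline{z})^{q/p}$ is strictly increasing in $\overline{z}\in(0,1)$, while the right side $\nu/[(1-\nu)^{q/p}\lambda^{q/p}]$ is strictly decreasing in $\lambda$. Hence $\overline{z}=\overline{z}(\lambda)$ is strictly decreasing, attaining its maximum over $\lambda\in[\nu^{p/q},1]$ at $\lambda=\nu^{p/q}$. In particular, Lemma~\ref{lemaperiod} tells us that $\overline{z}(\lambda_{PD})=p/(p+q)$, so $\overline{z}<p/(p+q)$ for every $\lambda>\lambda_{PD}$.

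Third, I would show that the hypothesis on $\nu$ forces $\lambda_{PD}\le\nu^{p/q}$, so that every admissible $\lambda\in[\nu^{p/q},1]$ satisfies $\lambda\ge\lambda_{PD}$ and therefore yields a stable fixed point. Using the expression \eqref{L},
$$\lambda_{PD}=\dfrac{q(p+q)^{(p-q)/q}\nu^{p/q}}{p^{p/q}(1-\nu)},$$
the inequality $\lambda_{PD}\le\nu^{p/q}$ simplifies to
$$1-\nu\ge \dfrac{q(p+q)^{(p-q)/q}}{p^{p/q}},\quad\text{i.e.,}\quad \nu\le 1-\dfrac{q(p+q)^{(p-q)/q}}{p^{p/q}},$$
which is exactly the hypothesis of the lemma.

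The argument is essentially a monotonicity/endpoint comparison, and I do not expect any genuine obstacle; the only mild technicality is verifying the strict monotonicity of $\overline{z}(\lambda)$ from \eqref{condfixed} and checking that the endpoint comparison $\lambda_{PD}\le\nu^{p/q}$ reproduces the stated threshold on $\nu$, both of which are direct algebraic manipulations using the formulas already established in Lemma~\ref{lemaperiod} and Remark~\ref{obsG}.
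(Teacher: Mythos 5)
Your proposal is correct, and it reaches the same reduction as the paper --- via \eqref{Gx}, stability of the fixed point is equivalent to $\overline{z}<\frac{p}{p+q}$ --- but it handles the decisive step differently and, in fact, more completely. The paper's proof computes $G_{zz}(\overline{z},\lambda)<0$ to argue that $G_z(\cdot,\lambda)$ decreases in $z$ and equals $-1$ at $z=\frac{p}{p+q}$, and then concludes $|G_z(\overline{z},\lambda)|<1$ ``for all $\overline{z}\in\left[\nu,\frac{p}{p+q}\right)$''; it never verifies that the hypothesis $0<\nu<1-\frac{q(p+q)^{(p-q)/q}}{p^{p/q}}$ actually places the fixed point in that interval, which is the whole content of the lemma. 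You close exactly this gap: the fixed-point relation \eqref{condfixed} has a strictly increasing left side in $\overline{z}$ and a strictly decreasing right side in $\lambda$, so $\overline{z}(\lambda)$ is strictly decreasing with $\overline{z}(\lambda_{PD})=\frac{p}{p+q}$, and the endpoint comparison $\lambda_{PD}\le\nu^{p/q}$ is algebraically equivalent to the stated threshold on $\nu$; hence every admissible $\lambda\in[\nu^{p/q},1]$ lies to the right of $\lambda_{PD}$ and yields $\overline{z}<\frac{p}{p+q}$. Two minor remarks: you are using the formula \eqref{L} for $\lambda_{PD}$ outside the parameter range under which Lemma~\ref{lemaperiod} is stated, but this is harmless since \eqref{L} is just the algebraic solution of $\overline{z}(\lambda)=\frac{p}{p+q}$ and needs no bifurcation-theoretic input here; and your argument makes the paper's $G_{zz}<0$ computation unnecessary, since the explicit formula $G_z(\overline{z},\lambda)=-\frac{q\overline{z}}{p(1-\overline{z})}$ is already monotone in $\overline{z}$. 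What your route buys is a genuine proof that the hypothesis on $\nu$ implies stability of the fixed point for \emph{every} admissible $\lambda$, rather than only a conditional statement about fixed points that happen to lie left of $\frac{p}{p+q}$.
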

\begin{proof}
	Computing the derivative of \eqref{derG} with respect to $z$ we obtain
	$$
	G_{zz}(\overline{z},\lambda)=\dfrac{q(q-p)\nu}{p^2\sqrt[p]{(1-\nu)^q}\sqrt[p]{(1-\overline{z})^{2p-q}}\lambda^{q/p}}<0, \qquad\forall\;z\in[\nu,1].
	$$
This means that $G_z(.,\lambda)$ is a decreasing function of $z.$ Now from \eqref{Gx} we know that $G_z\left(\dfrac{p}{p+q},\lambda\right)=-1.$ Therefore
$$G_z(\overline{z},\lambda)>-1, \forall\; \overline{z}\in \left[\nu,\dfrac{p}{p+q}\right)\; \mbox{and}\; \forall\; 0<\nu<1-\dfrac{q(p+q)^{(p-q)/q}}{p^{\frac{p}{q}}}.$$
Moreover, it follows from \eqref{derG}, that
$$G_z(z,\lambda)<0,\,\,\,\forall z\in [\nu,1].$$
Then, from the last two previous inequalities we get
$$
\left|G_z(\overline{ z},\lambda)\right|<1,\,\,\,\forall \;\overline{z} \in \left[\nu,\dfrac{p}{p+q}\right)\;\mbox{and}\;\forall\; 0<\nu<1-\dfrac{q(p+q)^{(p-q)/q}}{p^{\frac{p}{q}}}.
$$

This implies that $\overline{z}$ is a stable fixed point of $G$ and we get the result.
\end{proof}

Now we are in condition to prove the main result of this paper. It will be done in the next section.

\section{Proof of Theorem \ref{MainResult}}

\begin{proof} We consider the one-dimensional piecewise smooth rational degree map, $f:D\times\mathbb{R}\longrightarrow\mathbb{R}$, with $D\in\mathbb{R}$ given by
	
	\begin{equation}
	f(x,\mu)=\left\{\begin{array}{lr}
		f_1(x,\mu)=\nu x,& H(x,\mu)=x-\mu\geq0\\
		&\\
		f_2(x,\mu)=\nu x+e\sqrt[p]{(\mu-x)^q},& H(x,\mu)=x-\mu\leq0,
	\end{array}\right.	\label{funcf}
\end{equation}
where, $e>0$, $0<\nu<1$, $\mu\sim0^+,$ $p,q \in \mathbb{N}$ with $p>q$ and $(p,q)=1.$ Observe that, for $\mu^*\le0$ \eqref{funcf} has a stable fixed point at $x^*=0.$ 

We defined the trapping region
	$$W=\{x:\nu\mu \leq x\leq \mu\},$$
with $\mu\sim 0^+$, and the regions $R_I=\{x;\,\,x\leq\mu\},$ and $R_{II}=\{x;\,\,x\geq\mu\}.$

From Proposition \ref{proposicaom} it follows that if $x_0\in W$ is such that $x_0\approx\nu\mu$, then $f(x_0,\mu) \in R_{II}$.

Also from Proposition \ref{proposicaom} we define $m(x_0,\mu)$ as the first positive iteration of $x_0 \in W$ such that $f^m(x_0,\mu) \in W$. Taking  $z=\dfrac{x_0}{\mu}\in[\nu,1]$ we induce the map \eqref{F}
\begin{equation}
	\begin{array}{rll}
		F: [\nu,1] &\longrightarrow& W\\
		z & \mapsto &x_m=F(z)=F\left(\dfrac{x_0}{\mu}\right)=\nu^{m}\mu z+\nu^{m-1} e\sqrt[p]{\mu^q}\sqrt[p]{(1-z)^q},
	\end{array}
\end{equation}
with $z=\dfrac{x_0}{\mu}$, and $x_0\in W.$		
			
Now, as in Lemma \ref{lemaG} we take $k=M-m$, where $M=m(\nu\mu,\mu)$ is the maximum number of iterations given by Lemma \ref{lemaM}, and introduce the new parameter
	\begin{equation}
		\lambda=\left(\dfrac{\nu}{\dfrac{F(\nu)}{\mu}}\right)^{p/q}\in \left[\nu^{p/q},1\right],	
	\end{equation}
given in Lemma \ref{lemal}. By Lemma \ref{lemaG}, it follows that

\begin{equation}
		\begin{array}{ll}
			\dfrac{F(z)}{\mu}=&\dfrac{\nu^{1-k}\sqrt[p]{(1-z)^q}}{\sqrt[p]{(1-\nu)^q}\sqrt[p]{\lambda^q}}+\nu^{M-k}\bigg(z-\nu\dfrac{\sqrt[p]{(1-z)^q}}{\sqrt[p]{(1-\nu)^q}}\bigg)=G^{[k]}(z,\lambda)+\nu^{M-k}\bigg(z-\nu\dfrac{\sqrt[p]{(1-z)^q}}{\sqrt[p]{(1-\nu)^q}}\bigg),\\
		\end{array}\label{funff}
\end{equation}
with $k=0,1,2...$ and $z=\dfrac{x_0}{\mu}\in [\nu,1],$ where $G^{[0]}=G$ and $G$ is given in \eqref{auxG}.
	
	For $k=0,$ as $M$ can be made sufficiently large by taking $\mu\simeq0^+$ sufficiently small (Lemma \eqref{lemaM}), we have
	
	\begin{equation}
		\dfrac{F(z)}{\mu}\simeq G(z,\lambda),
	\end{equation}	
	where
	$$G:[\nu,1]\times\left[\nu^{p/q},1\right]\longrightarrow [\nu,1],$$
has the form
	\begin{equation}
		G(z,\lambda)=\dfrac{\nu\sqrt[p]{(1-z)^q}}{\sqrt[p]{(1-\nu)^q}\sqrt[p]{\lambda^q}}.
	\end{equation}	
	
	On the other hand, for $k\neq0$ we can write
	
	\begin{equation}
	\begin{array}{rl}
		\dfrac{F(z)}{\mu}=&\dfrac{\nu^{1-k}\sqrt[p]{(1-z)^q}}{\sqrt[p]{(1-\nu)^q}\sqrt[p]{\lambda^q}}+\nu^{M-k}\bigg(z-\nu\dfrac{\sqrt[p]{(1-z)^q}}{\sqrt[p]{(1-\nu)^q}}\bigg)\\
		&\\
		=&\dfrac{1}{\nu^k}\left(\dfrac{\nu\sqrt[p]{(1-z)^q}}{\sqrt[p]{(1-\nu)^q}\sqrt[p]{\lambda^q}}\right)+\nu^{M-k}\bigg(z-\nu\dfrac{\sqrt[p]{(1-z)^q}}{\sqrt[p]{(1-\nu)^q}}\bigg).\\
	\end{array}
\end{equation}
As $0<\nu<1$ for any $k\neq0$ and $\mu\simeq0^+$ sufficiently small it follows that
$$
\begin{array}{rl}
	\dfrac{F(z)}{\mu}\simeq G^{[k]}(z,\mu)=&\dfrac{\nu^{1-k}\sqrt[p]{(1-z)^q}}{\sqrt[p]{(1-\nu)^q}\sqrt[p]{\lambda^q}}.\\\
\end{array}
$$

Now, from Remark \eqref{obsG} we know that
	\begin{equation}
		G_z(z,\lambda)=-\dfrac{q}{p}\dfrac{\nu}{\sqrt[p]{(1-\nu)^q}\sqrt[p]{(1-z)^{p-q}}\lambda^{q/p}}<0,
	\end{equation}
and from \ref{funff} 
\begin{equation}\label{DGk}
	G_z^{[k]}(z,\lambda)=-\dfrac{q}{p}\dfrac{\nu^{1-k}}{\sqrt[p]{(1-\nu)^q}\sqrt[p]{(1-z)^{p-q}}\lambda^{q/p}}<0,
\end{equation}
Also from \eqref{funff} with $k=1,2,\ldots$ it follows that $G^{[k]}(z,\lambda)$ satisfy
	\begin{equation}
		G^{[k]}_z(z,\lambda)\leq G_z(z,\lambda)=-\dfrac{q\nu}{p\sqrt[p]{(1-\nu)^q}\sqrt[p]{(1-z)^{p-q}}\lambda^{q/p}}<0.
	\end{equation}

Moreover, $\overline{z}$ is a fixed point of $G^{[k]}$ if and only if 

\begin{equation}
	\dfrac{\overline{z}}{\sqrt[p]{(1-\overline{z})^q}}=\dfrac{\nu^{1-k}}{\sqrt[p]{(1-\nu)^q}\sqrt[p]{\lambda^q}}.\label{fixGk}
\end{equation}

From \eqref{DGk} and \eqref{fixGk} it follows that $G_{z}^{[k]}(\overline{z},\lambda)=-1$ if and only if
$$\overline{z}=\dfrac{p}{p+q}.$$	
	
Therefore, it follows that the dynamics of $G^{[k]}$ for $k=1,2,\ldots$ is qualitative analogous to the dynamics of $G.$ So, we can focus our analysis on the case where the number of iterations is maximum, that means that $m=M=m(\nu\mu,\mu)$ and $k=M-m=0$.

Therefore, it remains to study the possible bifurcation scenarios that may occur for $G(z,\lambda)$, with $\lambda \in \left[\nu^{p/q},1\right]$, in terms of $0<\nu<1.$

Related to function $G$ we have:
\begin{enumerate}
	\item[(i)] From Lemma \ref{lemachaos} if $\dfrac{p}{p+q}<\nu<1$ it admits a robust chaotic attractor close to the origin for all small values of $\mu\sim0^+;$
	\item[(ii)] From Lemma \ref{lemaperiod} if $1-\dfrac{q(p+q)^{p-q/q}}{p^{\frac{p}{q}}}<\nu<\dfrac{p}{p+q},$ then for all small values of $\mu\sim0^+,$ it admits  a period doubling bifurcation at $$\left(\overline{z},\lambda_{PD}\right)=\left (\dfrac{p}{p+q},\dfrac{q(p+q)^{\frac{p-q}{q}}}{p^{\frac{p}{q}}}\dfrac{ \nu^{\frac{p}{q}}}{1-\nu}\right).$$
It also follows from Lemma \eqref{mu_PD} that $\mu=\mu_{PD}$ and $\lambda_{PD}$ are related by	$\mu=\mu_{PD}=\left(\dfrac{\nu}{\nu^{M-1}e\sqrt[p]{(1-\nu)^q}\sqrt[p]{\lambda_{PD}^q}}\right)^{\frac{p}{q-p}}$, that implies that
$\mu_{PD}=e^{\frac{p}{p-q}}(p+q)\left(\dfrac{q^q}{p^p}\right)^{\frac{1}{p-q}}\nu^{\frac{p }{p-q}(M-1)}.$

Moreover, follows by the Proposition \ref{proposicaom} on which the $M$--periodic orbit exist in terms of $\mu$, that is, in terms of the parameter $\mu$, the stable fixed point of $G^{[0]}=G$ exists $\mu\in I_M$ where
$$\mu\in I_M=\left(\mu_{PD},\mu_1\right]=\left(e^{\frac{p}{p-q}}(p+q)\left(\dfrac{q^q}{p^p}\right)^{\frac{1}{p-q}}\nu^{\frac{p }{p-q}(M-1)},e^{\frac{q}{p-q}}(1-\nu)^{\frac{q}{p-q}}\nu^{\frac{p}{p-q}(M-2)} \right];$$

\item[(iii)]  From Lemma \ref{lemastable} if $0<\nu<1-\dfrac{q(p+q)^{p-q/q}}{p^{\frac{p}{q}}}$ then all the fixed points of $G$ are stable.
\end{enumerate}	
This concludes the proof of Theorem \ref{MainResult}.
\end{proof}

\section*{Appendix}\label{s2}

In this section we briefly describe the basic results from bifurcation theory  that we shall need for proving the main result of this paper.

Next result guarantees sufficient conditions for a family of one-parameter maps perform a \textit{period doubling bifurcation}. This result is useful for studying  family $G$ in \eqref{auxG} for $\nu \in (0,1).$ For a complete proof of this Theorem see \cite{reference12}.

\begin{teo}\label{FlipBifurcations}
Let $\overline{G}:\mathbb{R}\times\mathbb{R} \longrightarrow\mathbb{R}$ be a one-parameter family of maps such that, for $\lambda=\lambda_{PD},$ $\overline{G}$ has a fixed point $\overline{z}$ satisfying $\overline{G}_z(\overline{z}, \lambda_{PD})=-1$ with $K_1=\overline{G}_{\lambda}(\overline{z}, \lambda_{PD})\overline{G}_{zz}(\overline{z}, \lambda_{PD})+2\overline{G}_{\lambda z}(\overline{z}, \lambda_{PD})\neq0\,\,,
$ and $
K_2=\dfrac{1}{2}\overline{G}_{zz}(\overline{z}, \lambda_{PD})^2+\dfrac{1}{3}\overline{G}_{zzz}(\overline{z}, \lambda_{PD})^3\neq0,
$ Then:

\begin{itemize}
\item[(a)] There is a smooth curve of fixed points of $\overline{G}$ passing through $\left(\overline{z},\lambda_{PD}\right)$, where the stability changes in $\left(\overline {z},\lambda_{PD}\right)$.

\item[(b)] There is a smooth curve $\gamma$, passing through $\left(\overline{z},\lambda_{PD}\right)$, such that $\gamma-\{\left(\overline{z}, \lambda_{PD}\right)\}$ is the union of hyperbolic 2-periodic orbits.

\item[(c)] If $K_2>0$ (resp. $K_2<0$) then the periodic orbits are unstable (resp. stable).
\end{itemize}
\end{teo}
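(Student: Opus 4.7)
The plan is to follow the classical pattern for flip bifurcations: part (a) is obtained from the implicit function theorem applied to the fixed-point equation, and parts (b) and (c) come from analysing the second iterate $\overline{G}^{2}$ after factoring out the known branch of fixed points. For (a), set $H(z,\lambda) = \overline{G}(z,\lambda) - z$; then $H(\overline{z},\lambda_{PD}) = 0$ and $H_{z}(\overline{z},\lambda_{PD}) = \overline{G}_z - 1 = -2 \neq 0$, so the implicit function theorem gives a unique smooth curve $z = z^{*}(\lambda)$ of fixed points through $(\overline{z},\lambda_{PD})$ with $(z^{*})'(\lambda_{PD}) = \overline{G}_{\lambda}(\overline{z},\lambda_{PD})/2$. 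The multiplier $\rho(\lambda) := \overline{G}_{z}(z^{*}(\lambda),\lambda)$ satisfies
\[
\rho'(\lambda_{PD}) = \overline{G}_{zz}\,(z^{*})'(\lambda_{PD}) + \overline{G}_{z\lambda} = \tfrac{1}{2}\bigl(\overline{G}_{\lambda}\overline{G}_{zz} + 2\overline{G}_{z\lambda}\bigr)\Big|_{(\overline{z},\lambda_{PD})} = \tfrac{K_{1}}{2},
\]
so because $K_{1}\neq 0$ the multiplier crosses $-1$ transversally and the fixed point changes stability at $\lambda_{PD}$.

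For (b), consider $\Phi(z,\lambda) := \overline{G}^{2}(z,\lambda) - z$, whose zeros are precisely the fixed points of $\overline{G}$ together with the genuine $2$-periodic orbits. Since $\Phi$ vanishes along the curve $z = z^{*}(\lambda)$ produced in (a), I would use smoothness to factor
\[
\Phi(z,\lambda) = (z - z^{*}(\lambda))\,Q(z,\lambda),
\]
with $Q$ smooth. A direct computation using $\overline{G}_{z}(\overline{z},\lambda_{PD}) = -1$ yields $\Phi_{z}(\overline{z},\lambda_{PD}) = 0$ and $\Phi_{zz}(\overline{z},\lambda_{PD}) = 0$, which forces $Q$ and $Q_{z}$ to vanish at the bifurcation point. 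Introducing the shifted coordinates $u = z - z^{*}(\lambda)$ and $\eta = \lambda - \lambda_{PD}$, the Taylor expansion of $Q$ therefore begins as
\[
Q(u,\eta) = \alpha\,\eta + \beta\,u^{2} + \mathcal{O}\bigl(u^{3} + |u\eta| + \eta^{2}\bigr),
\]
and bookkeeping identifies $\alpha$ as a nonzero multiple of $K_{1}$ (via $\Phi_{z\lambda}(\overline{z},\lambda_{PD})$) and $\beta$ as a nonzero multiple of $K_{2}$ (via $\Phi_{zzz}(\overline{z},\lambda_{PD})$). Since $\alpha \neq 0$, the implicit function theorem applied to $Q = 0$ in the variable $\eta$ produces a smooth function $\eta(u) = -(\beta/\alpha)u^{2} + \mathcal{O}(u^{3})$; the corresponding curve in $(z,\lambda)$-space is the required $\gamma$ of hyperbolic $2$-periodic orbits through $(\overline{z},\lambda_{PD})$.

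For (c), I would evaluate $(\overline{G}^{2})_{z}$ along $\gamma$. Differentiating $\overline{G}^{2}$ at the period-$2$ points and substituting $\eta = \eta(u)$ yields, to leading order,
\[
(\overline{G}^{2})_{z}\big|_{\gamma} = 1 + C\,K_{2}\,u^{2} + \mathcal{O}(u^{3})
\]
for a positive constant $C$ tracked through the chain rule on $\overline{G}\circ\overline{G}$; hence $|(\overline{G}^{2})_{z}|>1$ on $\gamma\setminus\{(\overline{z},\lambda_{PD})\}$ when $K_{2}>0$ (unstable) and $<1$ when $K_{2}<0$ (stable). The main obstacle is the Taylor-expansion bookkeeping: verifying cleanly that the $u^{0}$ and $u^{1}$ terms of $Q|_{\eta=0}$ vanish (which depends crucially on $\overline{G}_{z}(\overline{z},\lambda_{PD}) = -1$) and extracting the precise combinations of partial derivatives of $\overline{G}$ that reproduce $K_{1}$ and $K_{2}$. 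This is purely mechanical once organised via the identities $Q_{u}(0,0) = \tfrac{1}{2}\Phi_{zz}(\overline{z},\lambda_{PD})$ and $Q_{uu}(0,0) = \tfrac{1}{3}\Phi_{zzz}(\overline{z},\lambda_{PD})$ together with the chain-rule expressions for $\Phi_{z\lambda}$ and $\Phi_{zzz}$ at the fixed point.
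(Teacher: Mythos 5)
Parts (a) and (b) of your proposal are correct, and they are in substance the standard proof: note that the paper itself offers no proof of this theorem at all (it only cites Guckenheimer--Holmes for it), and what you wrote is precisely the argument given there. The implicit function theorem step, the transversality computation $\rho'(\lambda_{PD})=K_{1}/2$ for the multiplier along the branch of fixed points, the factorization $\overline{G}^{2}(z,\lambda)-z=(z-z^{*}(\lambda))\,Q(z,\lambda)$, and the identities $Q(0,0)=\Phi_{z}=0$, $Q_{u}(0,0)=\tfrac12\Phi_{zz}=0$, $Q_{uu}(0,0)=\tfrac13\Phi_{zzz}$, $Q_{\eta}(0,0)=\Phi_{z\lambda}=-K_{1}$ are all right. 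One bookkeeping caveat: the identity $\Phi_{zzz}(\overline{z},\lambda_{PD})=-3\overline{G}_{zz}^{2}-2\overline{G}_{zzz}=-6K_{2}$, which is what makes your $\beta$ a nonzero multiple of $K_{2}$, holds only when $K_{2}$ is read as $\tfrac12\overline{G}_{zz}^{2}+\tfrac13\overline{G}_{zzz}$ at $(\overline{z},\lambda_{PD})$; the exponent $3$ on $\overline{G}_{zzz}$ in the statement is a typo of the paper, and your computation silently (and correctly) uses the uncubed quantity.

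The genuine gap is in part (c): you assert that the constant $C$ in $(\overline{G}^{2})_{z}|_{\gamma}=1+C\,K_{2}u^{2}+O(u^{3})$ is positive, but you never compute it, and in fact $C=-2$. Writing the family along the branch of fixed points as $g_{\lambda}(u)=\rho(\lambda)u+a(\lambda)u^{2}+b(\lambda)u^{3}+O(u^{4})$ with $u=z-z^{*}(\lambda)$, one has $(g_{\lambda}^{2})'(u)=\rho^{2}+2\rho a(1+\rho)u+3(\rho b+2\rho a^{2}+b\rho^{3})u^{2}+O(u^{3})$; eliminating $\rho^{2}-1$ via the period-two equation and using that $1+\rho=O(u^{2})$ on $\gamma$ gives
\[
(\overline{G}^{2})_{z}\big|_{\gamma}=1+2\left(\rho b+2\rho a^{2}+b\rho^{3}\right)u^{2}+O(u^{3})=1-2K_{2}\,u^{2}+O(u^{3}),
\]
because $\rho b+2\rho a^{2}+b\rho^{3}\to-2(a^{2}+b)=-K_{2}$ as $u\to0$ (here $a=\tfrac12\overline{G}_{zz}$, $b=\tfrac16\overline{G}_{zzz}$ at the bifurcation point). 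Hence $K_{2}>0$ yields \emph{stable} period-two orbits and $K_{2}<0$ unstable ones --- the opposite of what you claim, and also the opposite of item (c) as printed, which reverses the sign convention of the flip-bifurcation theorem in Guckenheimer--Holmes that the paper cites. A concrete check: the logistic family $f_{\mu}(x)=\mu x(1-x)$ at $(x,\mu)=(2/3,3)$ has $K_{1}\neq0$ and $K_{2}=\tfrac12(-6)^{2}=18>0$, and the period-two orbit born at $\mu=3$ has multiplier $-\mu^{2}+2\mu+4<1$ for $\mu>3$, hence is stable. So your scheme for (c) is the right one, but carrying out the Taylor bookkeeping you postponed refutes the sign you asserted: with the standard reading of $K_{2}$, part (c) as stated is false, so no correct proof of it is possible, and any argument that appears to establish it must contain a sign error of exactly this kind.
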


{\textbf{Acknowledgements.}} 
The first author is partially supported by Fapesp grant number
2019/10269-3. The second author were partially supported by a CEFET-MG: Centro Federal Tecnológico de Minas Gerais and Capes.

\end{section}
\bibliographystyle{plain}

\end{document}